\title{On minimal 4-folds of general type with $p_g \geq 2$}
\author{Jianshi Yan}
\address{\rm School of Mathematical Sciences, Fudan University, Shanghai 200433, China}
\email{jsyan15@fudan.edu.cn}
\thanks{2020 Mathematics Subject Classification: 14J35, 14E05, 14C20, 14E30.}
\newcommand{\bQ}{{\mathbb Q}}
\newcommand{\bP}{{\mathbb P}}
\newcommand{\roundup}[1]{\lceil{#1}\rceil}
\newcommand\Vol{\text{\rm Vol}}
\newcommand\lrw{\longrightarrow}
\newcommand{\lsgeq}{\succcurlyeq}
\newcommand{\lsleq}{\preccurlyeq}
\newcommand{\Mov}{\text{Mov}}
\newtheorem{thm}{Theorem}[section]
\newtheorem{lem}[thm]{Lemma}
\newtheorem{prop}[thm]{Proposition}
\theoremstyle{definition}
\theoremstyle{remark}
\newtheorem{rem}[thm]{Remark}
\newtheorem{proof of thm}{\bf Proof of Theorem \ref{modified lemma for 3-fold}}
\begin{document}
\begin{abstract}
We show that, for nonsingular projective 4-folds V of general type with geometric genus $p_g\geq 2$, $\varphi_{33}$ is birational onto the image and the canonical volume $\Vol(V)$ has the lower bound $\frac{1}{520}$, which improves a previous theorem by Chen and Chen.
\end{abstract}
\maketitle

\pagestyle{myheadings}
\markboth{\hfill J. Yan\hfill}{\hfill On minimal 4-folds of general type with $p_g\geq 2$\hfill}
\numberwithin{equation}{section}

\section{\bf Introduction}

Studying the behavior of pluricanonical maps of projective varieties has been one of the fundamental tasks in birational geometry. For varieties of general type, an interesting and critical problem is to find a positive integer $m$ so that $\varphi_m$ is birational onto the image. A momentous theorem given by Hacon-McKernan \cite{H-M}, Takayama \cite{Ta} and Tsuji \cite{Tsuji} says that there is some constant $r_n$ (for any integer $n>0$) such that the pluricanonical map $\varphi_m$ is birational onto its image for all $m \geq r_n$ and for all minimal projective $n$-folds of general type. By using the birationality principle (see, for example, Theorem \ref{bir. prin.}), an explicit upper bound of $r_{n+1}$ is determined by that of $r_n$. Therefore, finding the explicit constant $r_n$ for smaller $n$ is the upcoming problem. However, $r_n$ is known only for $n\leq 3$, namely, $r_1=3$, $r_2=5$ by Bombieri \cite{Bom} and $r_3\leq 57$ by Chen-Chen \cite{EXP1,EXP2, EXP3} and Chen \cite{Delta18}.

The first partial result concerning the explicit bound of $r_4$ was due to \cite[Theorem 1.11]{EXP3} by Chen and Chen that $\varphi_{35}$ is birational for all nonsingular projective 4-folds of general type with $p_g\geq 2$.  It is mysterious whether the numerical bound ``35'' is optimal under the same assumption.

In this paper, we go on studying this question and prove the following theorem:

\begin{thm}\label{main theorem}
Let $V$ be a nonsingular projective 4-fold of general type with $p_g(V) \geq 2$. Then
\begin{itemize}
\item[(1)] $\varphi_{m,V}$ is birational for all $m\geq 33$;
\item[(2)] $\Vol(V)\geq \frac{1}{520}$.
\end{itemize}
\end{thm}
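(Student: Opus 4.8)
The plan is to run the dimension-induction machinery of Chen and Chen, organized by $d_1:=\dim\varphi_{1,V}(V)$, and to sharpen the numerical optimization one notch beyond \cite{EXP3}. Since birationality of $\varphi_{m,V}$ and the canonical volume are birational invariants, we may replace $V$ by a convenient smooth model $X$ with $\pi\colon X\to X_{\min}$ to its minimal model, write $|K_X|=|M_1|+Z_1$ with $|M_1|$ base point free, and take the Stein factorization $f_1\colon X\to\Gamma_1$ of $\varphi_{|M_1|}$; since $p_g(V)\geq 2$ we have $d_1=\dim\Gamma_1\in\{1,2,3,4\}$, and $\Gamma_1\cong\bbP^1$ is forced once $p_g(V)=2$. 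First I would dispose of the cases $d_1\geq 2$. If $d_1=4$ then $M_1$ is big, so $\varphi_{m,V}$ is birational for small $m$ and $\Vol(V)$ is large; if $d_1=3$ (respectively $d_1=2$) the general fibre $F$ of $f_1$ is a curve of genus $\geq 2$ (respectively a smooth surface of general type), and one concludes by restricting $|mK_X|$ to $F$ and invoking $r_1=3$ (respectively $r_2=5$ together with Noether's inequality $K_F^2\geq 2p_g(F)-4$), the semipositivity of $f_{1*}\omega_{X/\Gamma_1}^{\otimes k}$, and the birationality principle (Theorem \ref{bir. prin.}), which reduces birationality of $\varphi_{m,X}$ to separating the general fibres of $f_1$ plus birationality of the restriction to $F$. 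As these configurations force $p_g(V)$ to be comparatively large, they leave ample slack for both ``$33$'' and ``$\tfrac1{520}$''.

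The core case is $d_1=1$, so that $\Gamma_1$ is a smooth curve, a general fibre $F$ of $f_1$ is a smooth projective $3$-fold of general type with minimal model $F_0$, and $M_1\succcurlyeq\mu F$ with $\mu\geq\max\{1,p_g(V)-1\}$ (with additional positivity available when $g(\Gamma_1)\geq 1$). The key object is the nef and big $\bQ$-divisor $\xi:=\pi^*K_{X_{\min}}\big|_F$ on $F$, which satisfies $\xi\preccurlyeq K_F$; crucially one does \emph{not} get $\xi^3\geq K_{F_0}^3$ for free, and the whole task is to bound $\xi^3$ from below and to force the restricted systems $|mK_X|\big|_F$ to be birational. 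My plan is: (i) introduce the auxiliary nef $\bQ$-divisor $\He\simQ(1-\eps)\,\pi^*K_{X_{\min}}$ so that Kawamata--Viehweg vanishing and the lifting of pluricanonical sections from $F$ become available; (ii) iterate the fibration structure inside $F$, passing to a general surface $S\subset F$ and then to a general curve $C\subset S$, while bookkeeping how many copies of $F$ (equivalently of $\xi$) survive each restriction; (iii) feed the numerical data $\bigl(K_{F_0}^3,\,p_g(F),\,\chi(\OO_{F_0})\bigr)$ of the fibre into a quantitative \emph{modified} effective-birationality statement for $3$-folds (Theorem \ref{modified lemma for 3-fold}), which refines $r_3\leq 57$ of \cite{EXP1,EXP2,EXP3,Delta18} so as to apply to the twisted sub-systems actually occurring on $F$. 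Combining (i)--(iii) with the known lower bound for the canonical volume of $3$-folds of general type, and optimizing $\eps$, yields simultaneously that $\varphi_{m,X}$ is birational for $m\geq 33$ and that $\Vol(V)=K_{X_{\min}}^4\geq\mu\cdot\xi^3\geq\tfrac1{520}$.

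The main obstacle, where essentially all the delicate bookkeeping lives, is the extremal configuration inside the case $d_1=1$: namely $g(\Gamma_1)=0$ and $p_g(V)=2$, so that $\mu=1$ and $|M_1|$ is merely a pencil of fibres contributing a single copy of $F$, together with a fibre $F$ of small invariants (in particular $p_g(F)\leq 1$), where no positivity on $F$ is available beyond what $\xi$ itself carries. In that situation one must simultaneously choose $\eps$, control the two-step restriction $X\supset F\supset S\supset C$ all the way down to an explicit genus bound on $C$, and account exactly for the coefficients lost at each restriction. Getting from ``$35$'' to ``$33$'' and producing the explicit constant $\tfrac1{520}$ amounts to a tighter choice of these parameters together with the use of Theorem \ref{modified lemma for 3-fold} in place of the cruder $r_3\leq 57$; all remaining configurations ($d_1\geq 2$, or $g(\Gamma_1)\geq 1$, or $p_g(F)\geq 2$) leave a comfortable margin.
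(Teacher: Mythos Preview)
Your outline has the right skeleton: split by $d_1=\dim\overline{\varphi_{1,V}(V)}$, note that $d_1\geq 2$ is already handled by \cite[Theorem 8.2]{EXP3} with a comfortable margin, and concentrate on $d_1=1$ with a chain of restrictions $Y'\supset T'\supset S\supset C$. This matches the paper's organization (Theorems \ref{thm(dim2)}, \ref{thm(dim1)}, \ref{volume}).

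The genuine gap is that you have off-loaded the entire content of the improvement to an unspecified ``Theorem \ref{modified lemma for 3-fold}'' and to ``optimizing $\eps$'', neither of which you state or prove. In the paper there is no such single black-box 3-fold theorem. The passage from $35$ to $33$ (and to $\tfrac{1}{520}$) is obtained by plugging into the explicit criterion
\[
m>\frac{4}{\xi}+\frac{2t_1}{a_{t_1,T}}+\frac{2}{\beta}+2,
\qquad
\Vol(Y)\geq \theta_1\Bigl(\frac{\theta_1}{1+\theta_1}\Bigr)^3\frac{a_{t_1,T}}{t_1}\,\beta\,\xi,
\]
the finite list of $(\beta,\xi)$-values extracted case by case from \cite{MA} (when $p_g(T)\geq 2$) and, crucially, from \cite{CHP} (when $p_g(T)=1$, split further into $P_4(T)=1,\,P_5(T)\geq 3$ versus $P_4(T)\geq 2$). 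The bottleneck configurations $(\beta,\xi)=(1/5,5/13)$ and $(1/5,1/3)$ from \cite{CHP} are precisely what force $m\geq 33$ and $\Vol\geq 1/520$. Your proposal contains no mechanism for producing these numbers; ``a tighter choice of parameters'' is not a proof, and the label you cite does not resolve to any stated result.

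A secondary point: the paper does not use an auxiliary $H_\eps$ with $\eps$-optimization. The canonical restriction inequalities \eqref{cri}, \eqref{cri1} are obtained instead via Kawamata's extension theorem (Theorem \ref{KaE}), which gives the clean constants $\tfrac{\theta_1}{1+\theta_1}$ and $\tfrac{a_{t_1,T}}{t_1+a_{t_1,T}}$ with no limiting process. Your $\eps$-route can in principle reach the same inequalities, but you would then still need the \cite{CHP} tables to finish.
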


\begin{rem} As being pointed out by Brown and Kasprzyk \cite{B-K}, the requirement on $p_g$ in Theorem \ref{main theorem}(1) is indispensable from the following list of canonical fourfolds:
\begin{enumerate}
\item $X_{78}\subset\bP(39,14,9, 8,6,1)$, $Vol(X_{78})=1/3024$;
\item $X_{78}\subset \bP(39,13,10,8,6,1)$, $Vol(X_{78})=1/3120$;
\item $X_{72}\subset \bP(36,11,9,8,6,1)$, $Vol(X_{72})=1/2376$;
\item $X_{70}\subset \bP(35,14,10,6,3,1)$, $Vol(X_{70})=1/1260$;
\item $X_{70}\subset \bP(35,14,10,5,4,1)$, $Vol(X_{70})=1/1400$;
\item $X_{68}\subset \bP(34, 12,8,7,5,1)$, $Vol(X_{68})=1/1680$.
\end{enumerate}

Besides, the following two hypersurfaces has $p_g \geq 2$ and $\varphi_{17}$ is non-birational, so one may expect that $18$ is the optimal lower bound of $m$ such that $\varphi_m$ is birational for a nonsingular projective 4-fold of general type with $p_g \geq 2$:
  \begin{itemize}
  \item[(1)] $X_{36}\subset \bP^5(18,6,5,4,1,1)$;
  \item[(2)] $X_{36}\subset \bP^5(18,7,5,3,1,1)$.
  \end{itemize}
\end{rem}

Throughout all varieties are defined over a field $k$ of characteristic $0$. We will frequently use the following symbols:
\begin{itemize}
\item[$\diamond$] `$\sim$' denotes linear equivalence or $\mathbb{Q}$-linear equivalence;
\item[$\diamond$] `$\equiv$' denotes numerical equivalence;
\item[$\diamond$] `$|A| \lsgeq |B|$' or, equivalently, `$|B| \lsleq |A|$'  means $|A| \supseteq |B|+$ fixed effective divisors.
\end{itemize}

\section{\bf Preliminaries}\label{pre}

Let $V$ be a nonsingular projective 4-fold of general type with geometric genus $p_g(V) \geq 2$. By the minimal model program (see, for instance \cite{BCHM, KMM,  K-M,  Siu}), we can find a minimal model $Y$ of $V$ with at worst $\mathbb{Q}$-factorial terminal singularities. Since the properties, which we study on $V$, are birationally invariant in the category of normal varieties with canonical singularities, we do focus our study on $Y$ instead. Clearly the canonical sheaf satisfies $\omega_Y\cong \mathcal{O}_Y(K_Y)$ for any canonical divisor $K_Y$.

\subsection{Convention}

\

For an arbitrary linear system $|D|$ of positive dimension on a normal projective variety $Z$, we may define {\it a generic irreducible element of $|D|$} in the following way. We have $|D|= \text{Mov}|D|+\text{Fix}|D|$, where $\text{Mov}|D|$ and $\text{Fix}|D|$ denote the moving part and the fixed part of $|D|$ respectively. Consider the rational map $\varphi_{|D|}=\varphi_{\text{Mov}|D|}$. We say that {\it $|D|$ is composed of a pencil} if $\dim \overline{\varphi_{|D|}(Z)}=1$; otherwise, {\it $|D|$ is not composed of a pencil}. {\it A generic irreducible element of $|D|$} is defined to be an irreducible component of a general member in $\text{Mov}|D|$ if $|D|$ is composed of a pencil or,  otherwise,  a general member of $\text{Mov}|D|$.

Keep the above settings. We say that $|D|$ can {\it distinguish different generic irreducible elements $X_1$ and $X_2$ of a linear system $|M|$ on $Z$} if neither $X_1$ nor $X_2$ is contained in $\text{Bs}|D|$, and if $\overline{\varphi_{|D|}(X_1)} \nsubseteq \overline{\varphi_{|D|}(X_2)}, \overline{\varphi_{|D|}(X_2)} \nsubseteq \overline{\varphi_{|D|}(X_1)}$.

A nonsingular projective surface $S$ of general type with $K_{S_0}^2=u$ and $p_g(S_0)=v$ is referred to as {\it a $(u,v)$-surface}, where $S_0$ is the minimal model of $S$.

\subsection{Set up for the map $\varphi_{1,Y}$}\label{setup}
\

Fix an effective divisor $K_1 \sim K_Y$. By Hironaka's theorem, we may take a series of blow-ups along nonsingular centers to obtain the model $\pi: Y' \rightarrow Y$ satisfying the following conditions:

(i) $Y'$ is nonsingular and projective;

(ii) the moving part of $|K_{Y'}|$ is base point free so that $$g_1=\varphi_{1,Y}\circ\pi:Y'\rightarrow \overline{\varphi_{1,Y}(Y)} \subseteq{\mathbb{P}^{p_g(Y)-1}}$$ is a non-trivial morphism;

(iii) the union of $\pi^*(K_1)$ and all those exceptional divisors of $\pi$ has simple normal crossing supports.

Take the Stein factorization of $g_1$.  We get
$$Y' \xlongrightarrow {f_1} \Gamma \xlongrightarrow s \overline{\varphi_{1,Y}(Y)},$$
and hence the following diagram commutes:
\smallskip

\begin{picture}(50,80) \put(100,0){$Y$} \put(100,60){$Y'$}
\put(180,0){$\overline{\varphi_{1,Y}(Y)}$} \put(200,60){$\Gamma$}
\put(115,65){\vector(1,0){80}}
\put(106,55){\vector(0,-1){41}} \put(203,55){\vector(0,-1){43}}
\put(114,58){\vector(2,-1){80}} \multiput(112,2.6)(5,0){13}{-}
\put(172,5){\vector(1,0){4}}
\put(145,70){$f_1$}
\put(208,30){$s$}
\put(92,30){$\pi$}
\put(132,-6){$\varphi_{1,Y}$}
\put(148,43){$g_1$}
\end{picture}
\bigskip

We may write
$$K_{Y'}=\pi^*(K_Y)+E_{\pi},$$
where $E_{\pi}$ is a sum of distinct exceptional divisors with positive rational coefficients. Denote by $|M_1|$ the moving part of $|K_{Y'}|$.
Since $Y$ has at worst $\mathbb{Q}$-factorial terminal singularities, we may write
$$\pi^*(K_Y) \sim M_1+E_1,$$
where $E_1$ is an effective $\mathbb{Q}$-divisor as well. 
One has $1\leq \dim(\Gamma) \leq 4$.

If $\dim(\Gamma)=1$, we have $M_1 \sim \sum\limits_{i=1}^{b} F_i\equiv bF$, where $F_i$ and $F$ are smooth fibers of $f_1$ and $b=\deg {f_1}_*\mathcal{O}_{Y'}(M_1) \geq p_g(Y)-1 \geq 1$. More specifically, when $g(\Gamma)=0$, we say that {\it $|M_1|$ is composed of a rational pencil} and when $g(\Gamma)>0$, we say that {\it $|M_1|$ is composed of an irrational pencil}.

If $\dim(\Gamma)>1$, by Bertini's theorem, we know that general members $T_i \in |M_1|$ are nonsingular and irreducible.

Denote by $T'$ a generic irreducible element of $|M_1|$.
Set
$$\theta_1=\theta_{1, |M_1|}=
\begin{cases}
b, & \text{if}\ \dim(\Gamma)=1;\\
1, &\text{if}\ \dim(\Gamma)\geq 2.
\end{cases}$$
So we naturally get
$$\pi^*(K_Y) \equiv \theta_1T'+E_1.$$

\subsection{Fixed notations}\label{notations}

\

Pick up a generic irreducible element $T'$ of $|M_1|$. Modulo further blow-ups on $Y'$, which is still denoted as $Y'$ for simplicity, we may have a birational morphism
$\pi_T=\pi|_{T'}: T'\rightarrow T$ onto a minimal model $T$ of $T'$. Let $t_1$ be the smallest positive integer such that $P_{t_1}(T) \geq 2$.

Set $|N|=\text{Mov}|t_1K_{T'}|$ and let $\varphi_{t_1, T}$ be the $t_1$-canonical map: $T \dashrightarrow \mathbb{P}^{P_{t_1}-1}$.
Similar to the 4-fold case, take the Stein factorization of the composition:
$$\varphi_{t_1, T} \circ \pi_T : T' \xlongrightarrow j \Gamma' \longrightarrow \overline{\varphi_{t_1, T}(T)}.$$
Denote by $j$ the induced projective morphism with connected fibers from $\varphi_{t_1, T} \circ \pi_T$ by Stein factorization.
Set
$$a_{t_1,T}=
\begin{cases}
c, & \text{if}\ \dim(\Gamma')=1;\\
1, &\text{if}\ \dim(\Gamma')\geq 2,
\end{cases}$$
where $c=\deg{j_*\mathcal{O}_{T'}(N)}\geq P_{t_1}(T)-1$.
Let $S$ be a generic irreducible element of $|N|$. Then we have
$$t_1\pi_T^*(K_T) \equiv a_{t_1,T}S+E_{N},$$
where $E_{N}$ is an effective $\mathbb{Q}$-divisor. Denote by $\sigma: S \rightarrow S_0$ the contraction morphism of $S$ onto its minimal model $S_0$.

Suppose that $|H|$ is a base point free linear system on $S$. Let $C$ be a generic irreducible element of $|H|$. As $\pi_T^*(K_T)|_S$ is nef and big, by Kodaira's lemma, there is a rational number $\tilde{\beta} >0$ such that $\pi_T^*(K_T)|_S \geq \tilde{\beta} C$.

Set
\begin{eqnarray*}
\beta=\beta(t_1,|N|,|H|)&=& \sup\{\tilde{\beta}|\tilde{\beta}>0 ~s.t.~ \pi_T^*(K_T)|_S\geq \tilde{\beta} C\} \\
\xi=\xi(t_1, |N|,|H|)&=&(\pi_T^*(K_T)\cdot C)_{T'}.
\end{eqnarray*}

\subsection{Technical preparation}

\

We will use the following theorem which is a special form of Kawamata's extension theorem (see \cite[Theorem A]{KawaE}).

\begin{thm}(cf. \cite[Theorem 2.2]{CHP}) \label{KaE}
Let $Z$ be a nonsingular projective variety on which $D$ is a smooth divisor.  Assume that $K_Z+D\sim A+B$ where $A$ is an ample $\bQ$-divisor and $B$ is an effective $\bQ$-divisor such that $D\not\subseteq \textrm{Supp}(B)$.
Then the natural homomorphism
$$H^0(Z, m(K_Z+D))\longrightarrow H^0(D, mK_D)$$
is surjective for any integer $m>1$.
\end{thm}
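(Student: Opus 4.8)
The plan is to deduce this statement from Kawamata's extension theorem \cite[Theorem A]{KawaE} (equivalently, to reproduce the argument of \cite[Theorem 2.2]{CHP}). First I would reduce the assertion to a genuine extension statement. Since $Z$ is nonsingular and $D$ is a smooth divisor, adjunction gives $\omega_D\cong \omega_Z(D)|_D$, i.e. $K_D\sim (K_Z+D)|_D$, and hence $mK_D\sim m(K_Z+D)|_D$ for every integer $m$. Under this identification the target $H^0(D, mK_D)$ becomes $H^0(D, m(K_Z+D)|_D)$, so it suffices to prove that the restriction map
$$H^0(Z, m(K_Z+D))\lrw H^0(D, m(K_Z+D)|_D)$$
is surjective for all $m>1$; a section of $mK_D$ then lifts to a section of $m(K_Z+D)$ on $Z$, which is exactly the claim.

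Next I would use the hypothesis $m>1$ to manufacture a genuine ample part. Writing
$$m(K_Z+D)\simQ (K_Z+D)+(m-1)(K_Z+D)\simQ K_Z+D+(m-1)A+(m-1)B,$$
we see that $A':=(m-1)A$ is an ample $\bQ$-divisor (here $m\geq 2$ is used) and $B':=(m-1)B$ is effective with $D\not\subseteq\textrm{Supp}(B')=\textrm{Supp}(B)$. Thus $m(K_Z+D)$ is $\bQ$-linearly equivalent to $K_Z+D+A'+B'$, a divisor of the shape "$K_Z+D$ plus a big class whose only effective contribution $B'$ is transverse to $D$". This is precisely the input required by Kawamata's extension theorem: after replacing $A'$ by a general member $A''$ of its $\bQ$-linear system so that $\textrm{Supp}(A''+B')\cup D$ has simple normal crossings and the pair $(Z, D+A''+B')$ is plt in a neighbourhood of $D$, the theorem applies with the line bundle $\OO_Z(m(K_Z+D))\cong \OO_Z(K_Z+D+A''+B')$ and yields surjectivity of the restriction to $D$. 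Combined with the adjunction identification of the first paragraph, this proves the theorem.

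The step where essentially all the content lies is matching Kawamata's hypotheses exactly: the identification $\OO_Z(m(K_Z+D))\cong\OO_Z(K_Z+D+A'+B')$ is only $\bQ$-linear, so the auxiliary effective divisor in the class of $A'$ must be chosen carefully — general enough to keep $D$ smooth, to avoid creating new common components with $B'$, and to keep the log canonical threshold along $D$ equal to $1$ — and one must verify that the resulting pair satisfies the plt (or klt-away-from-$D$) condition and the bigness condition demanded by \cite[Theorem A]{KawaE}. Once those checks are in place the extension is automatic; the only genuine obstacle is this bookkeeping, which is why it is convenient to simply invoke the packaged form \cite[Theorem 2.2]{CHP}.
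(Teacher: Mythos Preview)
The paper does not give a proof of this statement; it is quoted as a known result (a special form of \cite[Theorem A]{KawaE}, restated as \cite[Theorem 2.2]{CHP}) and used as a black box. Your proposal correctly recognises this, and your adjunction step identifying $H^0(D,mK_D)$ with $H^0(D,m(K_Z+D)|_D)$ is fine. In that sense your final sentence---just invoke the packaged form---is exactly what the paper does.

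Where your sketch goes astray is in the middle paragraph. Kawamata's theorem takes as input precisely the hypothesis already given, namely $K_Z+D\sim A+B$ with $A$ ample, $B$ effective and $D\not\subseteq\textrm{Supp}(B)$, and outputs surjectivity of $H^0(Z,m(K_Z+D))\to H^0(D,mK_D)$ for every $m>1$; there is no need to rewrite $m(K_Z+D)$ as $K_Z+D+(m-1)A+(m-1)B$ and re-verify anything. More importantly, the verification you propose would fail: nothing bounds the coefficients of $B$, so $(m-1)B$ can have arbitrarily large coefficients and the pair $(Z,\,D+A''+(m-1)B)$ need not be plt (or even lc) near components of $B$, no matter how generally you choose $A''$. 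The genuine proof of Kawamata's extension theorem proceeds via multiplier ideal sheaves (or Ohsawa--Takegoshi type arguments) and does not pass through a plt check of this kind. So drop that paragraph and simply cite \cite[Theorem A]{KawaE} or \cite[Theorem 2.2]{CHP}, as the paper does.
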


In particular, when $Z$ is of general type and $D$,  as a generic irreducible element, moves in a base point free linear system, the conditions of Theorem \ref{KaE} are automatically satisfied. Keep the settings as in \ref{setup} and \ref{notations}. Take $Z=Y'$ and $D=T'$. Then for sufficiently large and divisible integer $n>0$, we have
$$|n(\frac{1}{\theta_1}+1)K_{Y'}| \lsgeq |\frac{n}{\theta_1}(K_{Y'}+T')|$$
and the homomorphism
$$H^0(Y', \frac{n}{\theta_1}(K_{Y'}+T')) \rightarrow H^0(T', \frac{n}{\theta_1} K_{T'})$$
is surjective. By \cite[Theorem 3.3]{K-M}, $|\frac{n}{\theta_1} K_{T'}|$ is base point free, so one has $$\textrm{Mov}|\frac{n}{\theta_1} K_{T'}|=|\frac{n}{\theta_1}\pi_T^*(K_T)|.$$
It follows that
$$n(\frac{1}{\theta_1}+1)\pi^*(K_Y)|_{T'} \geq M_{n(\frac{1}{\theta_1}+1)}|_{T'} \geq  \frac{n}{\theta_1}\pi_T^*(K_T),$$
where the latter inequality holds by \cite[Lemma 2.7]{C01}.
So we get the {\it canonical restriction inequality}:
\begin{equation}
\pi^*(K_Y)|_{T'} \geq \frac{\theta_1}{1+\theta_1}\pi_T^*(K_T).\label{cri}
\end{equation}
Similarly, one has
\begin{equation}
\pi_T^*(K_T)|_S \geq \frac{a_{t_1,T}}{t_1+a_{t_1,T}}\sigma^*(K_{S_0}). \label{cri1}
\end{equation}

We will tacitly use the following type of birationality principle.

\begin{thm} (cf. \cite[2.7]{EXP2}) \label{bir. prin.}
Let $Z$ be a nonsingular projective variety, $A$ and $B$ be two divisors on $Z$ with $|A|$ being a base point free linear system. Take the Stein factorization of $\varphi_{|A|}$: $Z \overset{h} \longrightarrow W \longrightarrow \bP^{h^0(Z,A)-1}$ where $h$ is a fibration onto a normal variety $W$. Then the rational map $\varphi_{|B+A|}$ is birational onto its image if one of the following conditions is satisfied:
\begin{itemize}
\item [(i)] $\dim\varphi_{|A|}(Z)\geq 2$, $|B|\neq \emptyset$ and $\varphi_{|B+A|}|_D$ is birational for a
general member $D$ of $|A|$.

\item [(ii)] $\dim\varphi_{|A|}(Z)=1$, $\varphi_{|B+A|}$ can distinguish different general fibers of $h$ and $\varphi_{|B+A|}|_F$ is birational for a general fiber $F$ of $h$.
\end{itemize}
\end{thm}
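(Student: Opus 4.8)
The plan is to derive that $\varphi_{|B+A|}$ is birational onto its image from \emph{generic injectivity}: in characteristic $0$ it is enough to prove that for a general point $P_1\in Z$ one has $\varphi_{|B+A|}^{-1}\bigl(\varphi_{|B+A|}(P_1)\bigr)=\{P_1\}$ (after replacing $Z$ by a suitable blow-up resolving $\mathrm{Bs}|B+A|$, which alters neither hypothesis nor conclusion). The mechanism that powers both cases is a \emph{domination property}: since $|B|\neq\emptyset$, pick $B_0\in|B|$; then $|B+A|$ contains the sub-system $\{A'+B_0:A'\in|A|\}$, whose associated map equals $\varphi_{|A|}$ because $|A|$ is base point free. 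Hence $\varphi_{|B+A|}$ dominates $\varphi_{|A|}$ through a linear projection, so that $\varphi_{|B+A|}(P_1)=\varphi_{|B+A|}(P_2)$ forces $\varphi_{|A|}(P_1)=\varphi_{|A|}(P_2)$. I will exploit this to confine any pair $\{P_1,P_2\}$ lying in one fiber of $\varphi_{|B+A|}$ either to a common irreducible member of $|A|$ (case (i)) or to a controlled pair of fibers of $h$ (case (ii)), and then invoke the restricted birationality hypothesis.

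\textbf{Case (i).} Let $P_1$ be general and suppose $P_2\neq P_1$ lies in its $\varphi_{|B+A|}$-fiber. By the domination property $p:=\varphi_{|A|}(P_1)=\varphi_{|A|}(P_2)$ is a general point of $\overline{\varphi_{|A|}(Z)}$, which has dimension $\geq2$; let $D$ be the $\varphi_{|A|}$-preimage of a general hyperplane section of $\overline{\varphi_{|A|}(Z)}$ through $p$. Then $D$ is an irreducible member of $|A|$ — this follows from Bertini applied to the linear system of hyperplanes through $p$, whose associated map is the projection from the general point $p$, still of image-dimension $\geq2$, together with the fact that the base locus $\varphi_{|A|}^{-1}(p)$ has codimension $\geq2$ in $Z$ and so contributes no divisorial component — and $D$ lies in the dense open subset of $|A|$ over which $\varphi_{|B+A|}|_D$ is birational. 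Both $P_1$ and $P_2$ lie on $D$; since $P_1$ is a general point of $Z$, the symmetry of the incidence variety $\{(P,D'):P\in D'\}\subseteq Z\times|A|$ makes $P_1$ a general point of $D$ as well, so generic injectivity of $\varphi_{|B+A|}|_D$ yields $P_2=P_1$, a contradiction.

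\textbf{Case (ii).} Write $\varphi_{|A|}$ as $Z\xrightarrow{h}W\xrightarrow{\nu}\overline{\varphi_{|A|}(Z)}$, where $h$ is the fibration and $\nu$ is finite onto a curve. With $P_1$ general and $P_2\neq P_1$ in its $\varphi_{|B+A|}$-fiber, the domination property now gives only $\nu(h(P_1))=\nu(h(P_2))$, so I split into two sub-cases. If $h(P_1)=h(P_2)=:w$, then $P_1$ and $P_2$ both lie on the general fiber $F:=h^{-1}(w)$, $P_1$ is a general point of $F$, and birationality of $\varphi_{|B+A|}|_F$ forces $P_2=P_1$. If $h(P_1)\neq h(P_2)$, these are two distinct points of $W$ over a common point of $\overline{\varphi_{|A|}(Z)}$, so $(h(P_1),h(P_2))$ is a general member of the at-most-one-dimensional family $(W\times_{\overline{\varphi_{|A|}(Z)}}W)\setminus\Delta_W$; then $\varphi_{|B+A|}(P_1)$ lies in $\overline{\varphi_{|B+A|}\bigl(h^{-1}(h(P_1))\bigr)}\cap\overline{\varphi_{|B+A|}\bigl(h^{-1}(h(P_2))\bigr)}$, which by the distinguishing hypothesis is an intersection of two distinct irreducible codimension-$1$ subvarieties of $\overline{\varphi_{|B+A|}(Z)}$, hence of codimension $\geq2$. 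As the conjugate pair ranges over its one-dimensional family, the union of all such intersections is a proper closed subset of $\overline{\varphi_{|B+A|}(Z)}$, which the general point $\varphi_{|B+A|}(P_1)$ cannot lie on — a contradiction. In every case, then, $\varphi_{|B+A|}$ is generically injective, hence birational onto its image.

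\textbf{Main obstacle.} The difficulty is conceptual rather than computational. The two ingredients doing real work are the domination property — which converts the statement that $\varphi_{|B+A|}$ identifies $P_1$ and $P_2$ into the statement that $\varphi_{|A|}$ does, thereby trapping the pair on a common $D$ or fiber — and, in case (ii), the codimension count showing that a general point of $\overline{\varphi_{|B+A|}(Z)}$ cannot lie on two of the divisors $\overline{\varphi_{|B+A|}(h^{-1}(w))}$ at once; the latter works precisely because domination restricts $h(P_2)$ to the finitely many $\overline{\varphi_{|A|}(Z)}$-conjugates of $h(P_1)$, so the offending locus only sweeps out a one-parameter family. The rest is routine: the Bertini irreducibility of a general member of $|A|$ through a general point (most delicate when $\dim\overline{\varphi_{|A|}(Z)}=2$), the incidence-variety argument that a general point of $Z$ is a general point of the relevant $D$ or $F$, and the standard characteristic-zero equivalence, for rational maps, between generic injectivity and birationality onto the image.
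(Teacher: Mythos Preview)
The paper does not prove this statement; it is quoted from \cite[2.7]{EXP2} and invoked as a black box throughout. Your argument is the standard one behind that reference, and its architecture---domination of $\varphi_{|A|}$ by $\varphi_{|B+A|}$ via the inclusion $B_0+|A|\subseteq |B+A|$, followed by confinement of a putative pair $\{P_1,P_2\}$ to a common irreducible $D\in|A|$ (case~(i)) or to a controlled pair of fibers of $h$ (case~(ii))---is correct.

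Two minor technical remarks. In case~(i), the assertion that projection from the general point $p\in\overline{\varphi_{|A|}(Z)}$ still has image of dimension $\geq 2$ is false when $\overline{\varphi_{|A|}(Z)}=\bP^2$; the conclusion you need (irreducibility of the general $D\in|A|$ through a general $P_1$) nonetheless holds, because for general $P_1$ the pencil of such members is a general line in $|A|$ and so meets the reducible locus---proper and closed by Bertini---in only finitely many points. In case~(ii), your domination step uses $|B|\neq\emptyset$, which is a stated hypothesis only in~(i); in every application the paper makes this is satisfied, but strictly speaking case~(ii) as written needs either that extra assumption or the direct observation that the closures $\overline{\varphi_{|B+A|}(h^{-1}(w))}$ already form an at-most one-parameter family of divisors in $\overline{\varphi_{|B+A|}(Z)}$ (by birationality of $\varphi_{|B+A|}|_F$), so that a general image point lies on only finitely many of them and your codimension count still applies without invoking domination.
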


\subsection{Some useful lemmas}\label{known results}

\

The following results on surfaces and 3-folds will be used in our proof.

\begin{lem}(see \cite[Lemma 2.5]{EXP3})\label{cfc}
Let $\sigma: S \lrw S_0$ be the birational contraction onto the minimal model $S_0$ from a nonsingular projective surface $S$ of general type.  Assume that $S$ is not a $(1,2)$-surface and that $\tilde{C}$ is a curve on $S$ passing through very general points. Then $(\sigma^*(K_{S_0})\cdot \tilde{C})\geq 2$.
\end{lem}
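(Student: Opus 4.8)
The plan is to push everything down to the minimal model $S_0$. Since $\tilde C$ passes through very general points it sweeps out $S$, so it is not contracted by $\sigma$; hence $\sigma$ restricts to a birational morphism $\tilde C\to C_0:=\sigma_*\tilde C$, and the projection formula gives $(\sigma^*(K_{S_0})\cdot\tilde C)=(K_{S_0}\cdot C_0)=:d$. The curve $C_0$ is irreducible, it still moves in a family covering $S_0$, hence $C_0^2\geq 0$; an irreducible curve with non-negative self-intersection on a surface is nef. It then remains to rule out $d=0$ and $d=1$.

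\textbf{$d=0$ is impossible.} Then $C_0$ is a nonzero effective class with $(K_{S_0}\cdot C_0)=0$. As $K_{S_0}$ is nef and big, the Hodge index theorem makes the intersection form negative definite on $K_{S_0}^{\perp}$, forcing $C_0^2<0$ — a contradiction.

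\textbf{$d=1$ is impossible.} Adjunction on the smooth surface $S_0$ gives $2p_a(C_0)-2=(K_{S_0}+C_0)\cdot C_0=1+C_0^2$, so $C_0^2$ is odd and hence $C_0^2\geq 1$. The Hodge index theorem applied to the nef classes $K_{S_0}$ and $C_0$ yields
$$1=(K_{S_0}\cdot C_0)^2\geq K_{S_0}^2\cdot C_0^2\geq 1,$$
so $K_{S_0}^2=C_0^2=1$ and, by the equality case, $C_0\equiv K_{S_0}$. Now $K_{S_0}^2=1$ forces (classically) $q(S_0)=0$ and, by Noether's inequality, $p_g(S_0)\leq 2$; moreover numerical and linear equivalence then differ only by the finite group $\Pic^{\tau}(S_0)$, so the covering family of curves numerically equivalent to $K_{S_0}$ gives a complete linear system $|C_0|$ with $\dim|C_0|\geq 1$ (a rigid effective divisor cannot contain a very general point). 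Let $M_0$ be the moving part of $|C_0|$; Hodge index gives $0\leq M_0^2\leq (M_0\cdot K_{S_0})^2\leq (C_0\cdot K_{S_0})^2=1$. If $M_0^2=1$, then $M_0\equiv K_{S_0}$ and $|C_0|=|M_0|$ has no fixed part, so $\varphi_{|C_0|}$ is generically finite of degree one onto a surface of degree $\leq C_0^2=1$, i.e. onto $\bP^2$; hence $h^0(S_0,2C_0)\geq h^0(\bP^2,\OO(2))=6$, while Riemann--Roch together with Kawamata--Viehweg vanishing (writing $2C_0=K_{S_0}+(2C_0-K_{S_0})$ with $2C_0-K_{S_0}\equiv K_{S_0}$ big and nef) gives $h^0(S_0,2C_0)=\chi(\OO_{S_0})+1=p_g(S_0)+2\leq 4$, a contradiction. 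If $M_0^2=0$, a generic irreducible element $\mathcal{F}$ of $|M_0|$ satisfies $\mathcal{F}^2=0$ and $\mathcal{F}\cdot K_{S_0}=1$, so adjunction forces $2p_a(\mathcal{F})-2=1$, which is absurd. (Alternatively, when $C_0\sim K_{S_0}$, non-rigidity gives $p_g(S_0)=h^0(S_0,C_0)\geq 2$, hence $p_g(S_0)=2$, so $S$ would be a $(1,2)$-surface, contradicting the hypothesis.) Therefore $d\geq 2$, as claimed.

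The crux is this last step. Its easy part is the numerical reduction to $K_{S_0}^2=1$ and $C_0\equiv K_{S_0}$; the genuine obstacle is excluding the minimal surfaces of general type with $K^2=1$ and $p_g\leq 1$ — namely (numerical or classical) Godeaux surfaces and those with $p_g=1$ — which also satisfy all of Step 2's numerical conclusions. Handling them forces one both to make the implication ``moving through very general points $\Rightarrow$ honest positive-dimensional linear system on $S_0$'' precise (using regularity, $q(S_0)=0$) and to rule out, via Kawamata--Viehweg vanishing together with the Riemann--Roch bookkeeping above (or directly via Bombieri's classification of surfaces with small $K^2$), that any such surface carries a moving curve numerically equivalent to $K_{S_0}$.
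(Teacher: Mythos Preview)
The paper supplies no proof of this lemma; it is quoted from \cite[Lemma~2.5]{EXP3} without argument, so there is nothing in the present paper to compare your attempt against.

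On its own merits: your projection-formula reduction to $(K_{S_0}\cdot C_0)$, the exclusion of $d=0$, and the Hodge-index reduction of $d=1$ to $K_{S_0}^2=C_0^2=1$ with $C_0\equiv K_{S_0}$ are all correct, as is the $M_0^2=0$ subcase. The remaining gap---which you yourself flag in the closing paragraph---sits in the $M_0^2=1$ subcase: the claim that $\varphi_{|C_0|}$ is birational onto $\bP^2$ presupposes $h^0(C_0)\ge 3$, which you have not established (only $h^0(C_0)\ge 2$ follows from non-rigidity), and your parenthetical alternative covers only $C_0\sim K_{S_0}$, leaving the torsion case $0\ne C_0-K_{S_0}\in\Pic^{\tau}(S_0)$ open. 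A short patch eliminates $p_g(S_0)=0$: since $S_0$ is integral, the multiplication map $\mathrm{Sym}^2 H^0(C_0)\hookrightarrow H^0(2C_0)$ is injective, giving $h^0(2C_0)\ge 3$, which contradicts your own Riemann--Roch/vanishing computation $h^0(2C_0)=p_g+2=2$. The case $p_g(S_0)=1$, however, still needs an additional input (for instance that minimal surfaces with $K^2=p_g=1$, $q=0$ have torsion-free Picard group, or a direct classification argument as in Bombieri/Catanese), so as written your argument is not complete.
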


\begin{lem}(\cite[Lemma 2.5]{C14}) \label{curve lem.1}
Let $S$ be a nonsingular projective surface. Let $L$ be a nef and big $\bQ$-divisor on $S$ satisfying the following conditions:
\begin{itemize}
\item[(1)] $L^2>8$;
\item[(2)] $(L \cdot C_{x})\geq 4$ for all irreducible curves $C_{x}$ passing through any very general point $x\in S$.
\end{itemize}
Then $|K_S+\roundup{L}|$ gives a birational map.
\end{lem}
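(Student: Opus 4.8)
The plan is to reduce the statement to the separation of two very general points and then to run a Nadel-vanishing argument built on a carefully constructed effective $\bQ$-divisor. To prove $\map_{|K_S+\roundup{L}|}$ is birational onto its image it suffices, fixing two very general points $x_1,x_2\in S$, to produce sections of $|K_S+\roundup{L}|$ that separate them. Write $\roundup{L}=L+E_0$ with $E_0=\roundup{L}-L$ effective and $\rounddown{E_0}=0$; after replacing $L$ by a suitable representative in its $\bQ$-linear equivalence class we may assume $E_0$ has simple normal crossing support away from $x_1,x_2$ (and in the integral case $E_0=0$).

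First I would use the numerical hypothesis $L^2>8$ to force high multiplicity at both points simultaneously. Since $L$ is nef and big, $h^0(S,\OO_S(\rounddown{mL}))\geq(\tfrac{L^2}{2}-o(1))m^2$ along a suitable sequence $m\to\infty$, while imposing multiplicity $\geq 2m$ at each of $x_1,x_2$ costs at most $2\binom{2m+1}{2}=4m^2+2m$ linear conditions. As $L^2>8$, for $m\gg 0$ there is a nonzero $D_m\in|\rounddown{mL}|$ with $\mathrm{mult}_{x_i}D_m\geq 2m$ for $i=1,2$. Setting $D=\tfrac1m D_m$ and then perturbing via a decomposition $L\simQ A+N$ with $A$ ample and $N\geq 0$, I obtain an effective $\bQ$-divisor, still called $D$, with $D\simQ(1-\eps)L$ for some small $\eps>0$ and $\mathrm{mult}_{x_i}D>2$; in particular the log canonical threshold of $(S,D)$ at each $x_i$ is $<1$, so $(S,D)$ is not klt at $x_1$ and at $x_2$.

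Next comes the crux: cutting the log canonical centres down to points. Let $c\leq 1$ be the log canonical threshold of $(S,D)$ near $\{x_1,x_2\}$, so $(S,cD)$ is log canonical with a nonempty LC locus through both points. If a minimal LC centre through, say, $x_1$ were a curve $\tilde C$, then $\tilde C$ passes through the very general point $x_1$, hence $\tilde C$ moves in a covering family and, by hypothesis (2), $(L\cdot\tilde C)\geq 4$; since $S$ is smooth, $\tilde C$ occurs in $cD$ with coefficient $1$, and comparing this with $(cD\cdot\tilde C)\leq(1-\eps)(L\cdot\tilde C)$ one can peel $\tilde C$ off and re-run the perturbation. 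By Kollár's technique of diminishing the LC centre, using the numerical room provided by $L^2>8$ together with $(L\cdot C_{x_i})\geq 4$, this process terminates with the minimal LC centre through each $x_i$ being the reduced point $\{x_i\}$; by the tie-breaking trick one may further arrange that these two $0$-dimensional centres are isolated and reduced, so that the final divisor $D'\simQ(1-\eps')L$ has multiplier ideal $\mathcal{J}=\mathcal{J}(S,cD')$ with $\OO_S/\mathcal{J}\cong k_{x_1}\oplus k_{x_2}$. I expect this step — turning a possibly one‑dimensional LC centre into a point, which is exactly where hypothesis (2) is consumed — to be the main obstacle.

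Finally, Nadel vanishing closes the argument. One has $\roundup{L}-cD'\simQ E_0+\eps'L$ (incorporating $E_0$ into $D'$ from the outset if one prefers), which is the sum of the nef and big $\bQ$-divisor $\eps'L$ and an effective SNC divisor with coefficients $<1$; hence $H^1\bigl(S,\OO_S(K_S+\roundup{L})\otimes\mathcal{J}\bigr)=0$, and therefore
\[
H^0\bigl(S,K_S+\roundup{L}\bigr)\lrw H^0\bigl(S,\OO_S/\mathcal{J}\bigr)=k_{x_1}\oplus k_{x_2}
\]
is surjective. Thus $|K_S+\roundup{L}|$ separates $x_1$ and $x_2$, and since these were arbitrary very general points, $\map_{|K_S+\roundup{L}|}$ is birational onto its image.
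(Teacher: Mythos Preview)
The paper does not prove this lemma; it is quoted from \cite[Lemma~2.5]{C14} and used only as a black box in Proposition~\ref{birat.non12} and Theorem~\ref{thm(dim2)}. There is therefore no proof in the present paper to compare your argument against.

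For what it is worth, your Angehrn--Siu/Nadel outline is the standard route and is correct in spirit, but the cutting-down step is where the real content lies and your sketch leaves it underspecified. After taking the log canonical threshold $c$ you have already consumed a proportion $c(1-\eps)$ of $L$; to force the one-dimensional centre $\tilde C$ down to a point you must produce, from the residual $(1-c(1-\eps))L$, a divisor whose restriction to $\tilde C$ is sufficiently singular at $x_i$, and it is precisely here that the constants $8$ and $4$ get pinned down. The sentence ``comparing this with $(cD\cdot\tilde C)\leq(1-\eps)(L\cdot\tilde C)$ one can peel $\tilde C$ off and re-run the perturbation'' does not by itself show the budget suffices, especially since hypothesis~(2) is a non-strict inequality $(L\cdot C_x)\geq 4$, which typically forces a limiting argument or an appeal to a sharper Reider-type input rather than a naive open-condition perturbation. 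Finally, the aside that $\tilde C$ ``moves in a covering family'' is unnecessary: hypothesis~(2) is stated for \emph{every} irreducible curve through a very general point, so you may invoke it directly without arguing that $\tilde C$ deforms.
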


\section{\bf Proof of the main theorem}

As an overall discussion, we keep the same settings as in \ref{setup} and \ref{notations}.

\subsection{Separation properties of $\varphi_{m,Y}$}

\

\begin{lem}\label{dis. 3-fold} Let $Y$ be a minimal 4-fold of general type with $p_g(Y)\geq 2$. Then $|mK_{Y'}|$ can distinguish different generic irreducible elements of $|M_1|$ for all $m \geq 3$.
\end{lem}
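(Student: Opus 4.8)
The plan is to split into the two cases according to $\dim(\Gamma)$, exactly as in the setup of $\varphi_{1,Y}$, and in each case exhibit a base point free sublinear system of $|mK_{Y'}|$ (for $m\geq 3$) that separates two distinct generic irreducible elements $T'_1,T'_2$ of $|M_1|$.

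\emph{Case $\dim(\Gamma)\geq 2$.} Here a generic irreducible element $T'$ of $|M_1|$ is just a general member of the base point free linear system $|M_1|$, and $|M_1|$ itself already separates two distinct such members. Since $m\geq 3$, we have $|mK_{Y'}|\succcurlyeq |M_1|$ (indeed $|mM_1|$, or at least $|M_1|$ plus an effective divisor), and adding effective base loci cannot destroy the separation as long as neither $T'_1$ nor $T'_2$ lies in the base locus; because the $T'_i$ are general members of a moving system, this holds. Hence $|mK_{Y'}|$ distinguishes $T'_1$ and $T'_2$. This case is essentially formal.

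\emph{Case $\dim(\Gamma)=1$.} This is the substantive case and the main obstacle. Now $M_1\equiv bF$ with $b\geq p_g(Y)-1$, and a generic irreducible element $T'$ is a general fiber of $f_1:Y'\to\Gamma$. If $g(\Gamma)=0$, then $|M_1|$ already contains the pencil $|bF|\supseteq$ enough to separate two general fibers when $b\geq 2$; the delicate subcase is $b=1$ (so $p_g(Y)=2$) and the irrational case $g(\Gamma)\geq 1$. In those situations $|M_1|$ alone need not separate two general fibers, so I would instead use Kawamata's extension theorem (Theorem \ref{KaE}) / the canonical restriction setup: writing $K_{Y'}+T'\sim A+B$ with $A$ ample and $B\geq 0$ not containing $T'$, one gets for $m\geq 2$ that $\varphi_{m(K_{Y'}+T')}$ restricts surjectively to $T'$, and more usefully one compares $|mK_{Y'}|$ with $|K_{Y'}+T'|$ plus $(m-1)K_{Y'}-(\text{something effective})$. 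Concretely: since $\pi^*(K_Y)\equiv bF+E_1$ with $b\geq 1$ and $K_{Y'}=\pi^*(K_Y)+E_\pi$, for $m\geq 3$ we can peel off two fibers, $mK_{Y'}\succcurlyeq (m-2)K_{Y'}+2F$-type estimates won't immediately separate fibers when $b=1$, so instead one invokes that $|K_{Y'}+\lceil (m-1)\pi^*K_Y\rceil|$ (or a suitable base point free twist) separates fibers of a fibration over a curve once $m-1\geq 2$, using that $(m-1)\pi^*(K_Y)$ restricted to a general fiber is big — here the birationality principle Theorem \ref{bir. prin.}(ii) is the natural packaging, applied to show separation of fibers rather than full birationality.

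\emph{Key steps, in order.} (1) Reduce to showing, for each of the two range-of-$\dim\Gamma$ cases, that some subsystem of $|mK_{Y'}|$ separates $T'_1$ from $T'_2$. (2) Dispose of $\dim\Gamma\geq 2$ using that $|M_1|\subseteq$ (a twist inside) $|mK_{Y'}|$ already separates general members. (3) For $\dim\Gamma=1$ with $b\geq 2$, use the sub-pencil $|bF|$ directly. (4) For $\dim\Gamma=1$ with $b=1$ (forcing $p_g=2$, and including the irrational pencil case), use Kawamata extension / Kodaira's lemma to produce, inside $|mK_{Y'}|$ for $m\geq 3$, a system of the form $|K_{Y'}+\text{(nef and big, with a pull-back of a degree-}\geq 2\text{ divisor from }\Gamma)|$ and invoke Theorem \ref{bir. prin.}(ii) to get separation of general fibers. (5) Check throughout that the added fixed parts do not contain the $T'_i$, which is automatic since they move.

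\emph{Main obstacle.} The crux is the irrational-pencil / $b=1$ subcase of $\dim\Gamma=1$: when $g(\Gamma)\geq 1$ the line bundle on $\Gamma$ coming from $f_{1*}\mathcal{O}(M_1)$ need not be very ample, so separating two general fibers requires pushing an extra multiple of $\pi^*(K_Y)$ down via $f_1$ and arguing that $m=3$ already suffices; getting the bound exactly at $m\geq 3$ (rather than $m\geq 4$) is where care is needed, and I expect this is precisely why the lemma is stated for $m\geq 3$ and not an easier range.
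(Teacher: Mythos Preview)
Your case analysis in the pencil case is miscalibrated, and this leads to a genuine gap.

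First, the ``main obstacle'' you single out is empty. If $g(\Gamma)\geq 1$ then a degree-$1$ line bundle on $\Gamma$ has at most one global section; since $h^0(\Gamma,{f_1}_*\OO_{Y'}(M_1))=p_g(Y)\geq 2$, the irrational pencil case forces $b\geq 2$. So there is no ``$b=1$, irrational'' subcase to worry about, and in the rational case with $b=1$ the system $|M_1|$ is already a pencil over $\bP^1$ that separates general fibers.

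Second, your step (3) does not cover the irrational case with $b\geq 2$. There is no linear system ``$|bF|$'' when the fibers are not mutually linearly equivalent, and $|M_1|$ itself can fail to separate two general fibers: for instance if $\Gamma$ is elliptic and $b=2$, then $|M_1|$ comes from a $g^1_2$ on $\Gamma$ and identifies general pairs of fibers. So neither $|M_1|$ nor any naive sub-pencil settles this case.

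The missing idea is a direct Kawamata--Viehweg vanishing argument rather than the extension theorem or Theorem~\ref{bir. prin.}. In the irrational case one uses $b\geq 2$ to ensure that $M_1-T_1-T_2\equiv (b-2)F$ is nef; then for $m\geq 3$ the $\bQ$-divisor $(m-2)\pi^*(K_Y)+(M_1-T_1-T_2)$ is nef and big, so
\[
H^1\big(Y',\,K_{Y'}+\roundup{(m-2)\pi^*(K_Y)}+M_1-T_1-T_2\big)=0,
\]
giving a surjection of $H^0\big(K_{Y'}+\roundup{(m-2)\pi^*(K_Y)}+M_1\big)$ onto the direct sum of the restrictions to $T_1$ and $T_2$. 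Each summand is nonzero because $M_1|_{T_i}\sim 0$ and $\pi^*(K_Y)|_{T_i}$ is effective for general $T_i$ (as $p_g(Y)\geq 2$). This is what produces separation, and it is exactly where the bound $m\geq 3$ enters. Your proposed use of the extension theorem gives surjectivity onto a \emph{single} $T'$, which does not by itself separate two fibers; the vanishing/surjection onto the \emph{pair} is the point you are missing.
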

\begin{proof} Suppose $m \geq 3$. As we have $mK_{Y'} \geq M_1$, by Theorem \ref{bir. prin.}, we may just consider the case when $|M_1|$ is composed of a pencil. In particular, when $|M_1|$ is composed of a rational pencil, which is the case when $\Gamma \cong \mathbb{P}^1$, the global sections of ${f_1}_*\mathcal{O}_{Y'}(M_1)$ can distinguish different points of $\Gamma$. So $|M_1|$, and consequently $|mK_{Y'}|$ can distinguish different general fibers of $f_1$. Hence we may just deal with the case when $|M_1|$ is composed of an irrational pencil. We have $M_1 \sim \sum\limits_{i=1}^{b} T_i$, where $T_i$ are smooth fibers of $f_1$ and $b \geq 2$. Pick two different generic irreducible elements $T_1$, $T_2$  of $|M_1|$. Then by Kawamata-Viehweg vanishing theorem (\cite{KV, VV}), one has
$$H^1(K_{Y'}+\roundup{(m-2)\pi^*(K_Y)}+M_1-T_1-T_2)=0,$$
and the surjective map
\begin{eqnarray}
&&H^0(Y', K_{Y'}+\roundup{(m-2)\pi^*(K_Y)}+M_1) \notag\\
&\lrw& H^0(T_1, \big(K_{Y'}+\roundup{(m-2)\pi^*(K_Y)}+M_1\big)|_{T_1})\label{e1}\\
&&\oplus  H^0(T_2, \big(K_{Y'}+\roundup{(m-2)\pi^*(K_Y)}+M_1\big)|_{T_2}).\label{e2}
\end{eqnarray}
Since $p_g(Y)\geq 2$, both $K_{T_i}$ and $\pi^*(K_Y)$ are effective. So for general $T_i$, $\pi^*(K_Y)|_{T_i}$ is effective. As $T_i$ is moving and $M_1|_{T_i}\sim 0$, both groups in (\ref{e1}) and (\ref{e2}) are non-zero. Therefore, $|mK_{Y'}|$ can distinguish different generic irreducible elements of $|M_1|$.
\end{proof}


\begin{lem}\label{ddgies} Let $Y$ be a minimal 4-fold of general type with $p_g(Y)\geq 2$. Pick up a generic irreducible element $T'$ of $|M_1|$. Then $|mK_{Y'}||_{T'}$ can distinguish different generic irreducible elements of $|N|$ for all $$m \geq 2t_1+4.$$
\end{lem}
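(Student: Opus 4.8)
The plan is to mimic the strategy of Lemma \ref{dis. 3-fold}, but one dimension down: we want to separate two generic irreducible elements $S_1,S_2$ of $|N|$ inside $T'$, using a vanishing-theorem argument on $T'$ together with the restriction inequality \eqref{cri}. First I would reduce, via Theorem \ref{bir. prin.}, to the case where $|N|$ is composed of a pencil on $T'$; and within that, just as in Lemma \ref{dis. 3-fold}, the rational-pencil case is handled because the global sections of $j_*\mathcal{O}_{T'}(N)$ already separate points of $\Gamma'$, so $|mK_{Y'}||_{T'} \succcurlyeq |N|$ separates different fibers of $j$. Thus the remaining case is that $|N|$ is composed of an irrational pencil, so $N \sim \sum_{i=1}^{c} S_i$ with $c \geq 2$ and the $S_i$ smooth fibers of $j$.

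Next, for two chosen generic irreducible elements $S_1, S_2$ of $|N|$, I would apply Kawamata--Viehweg vanishing on $T'$ to get surjectivity of the restriction map
$$H^0\big(T', K_{T'} + \roundup{D} + N\big) \lrw H^0(S_1, \cdots) \oplus H^0(S_2, \cdots),$$
where $D$ is an appropriately chosen nef and big $\bQ$-divisor on $T'$ supported so that $S_1 + S_2$ can be subtracted and the vanishing $H^1(K_{T'}+\roundup{D}+N-S_1-S_2)=0$ holds. The natural candidate for $D$ is a multiple of $\pi_T^*(K_T)$, or better $\pi^*(K_Y)|_{T'}$ itself (using \eqref{cri}), arranged so that $K_{T'}+\roundup{D}+N$ is dominated by $mK_{Y'}|_{T'}$. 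The bookkeeping: $K_{T'} \leq \roundup{\pi_T^*(K_T)} + (\text{stuff})$, $N \leq t_1 K_{T'}$ roughly (since $|N| = \Mov|t_1 K_{T'}|$), and I need enough copies of $\pi^*(K_Y)|_{T'}$ so that after rounding and adding $K_{T'}$ and $N$, the total is $\leq m \pi^*(K_Y)|_{T'}$ for $m \geq 2t_1+4$. Counting: one $K_{T'}$ needs about $1+\frac{1}{\theta_1} \leq 2$ copies of $\pi^*(K_Y)|_{T'}$ via \eqref{cri}; $N \leq t_1 K_{T'}$ needs about $t_1(1+\frac{1}{\theta_1}) \leq 2t_1$; and the $D$ providing positivity for vanishing needs a little more — so the bound $2t_1 + 4$ should come out, with the ``$+4$'' absorbing the rounding and the strict positivity needed by Kawamata--Viehweg (one needs $D - S_1 - S_2$, equivalently $D$ minus two fibers, to still be nef and big, which costs roughly $\frac{2}{c} \leq 1$ extra units plus slack).

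Then I would verify the two $H^0$ groups on the right are nonzero: since $p_g(Y) \geq 2$, $\pi^*(K_Y)|_{T'}$ is effective, hence so is its restriction to a general $S_i$; $K_{S_i}$ is effective as $S_i$ is of general type (a generic irreducible element of $|N|$, so its minimal model is a surface of general type with $P_{t_1} \geq 2$); and $N|_{S_i} \sim 0$ since $S_i$ is a fiber of the pencil. So the restricted linear system on each $S_i$ is nonempty, and surjectivity plus distinctness of $S_1,S_2$ gives that $|mK_{Y'}||_{T'}$ distinguishes them, applying the birationality principle Theorem \ref{bir. prin.}(ii) to conclude. The main obstacle I anticipate is purely the coefficient juggling: making sure that after all the roundups — $\roundup{(m-2)\pi^*(K_Y)}$-type terms, the $E_\pi$, $E_1$, $E_N$ error divisors, and the fractional loss in \eqref{cri} — the divisor class $K_{Y'} + \roundup{\cdots}$ restricted to $T'$ genuinely dominates $K_{T'}+\roundup{D}+N$ with $D$ nef and big minus two fibers, and that this forces exactly $m \geq 2t_1 + 4$ rather than something slightly larger. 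A secondary subtlety is the irrational-pencil sub-case with $c$ small (e.g. $c = 2$): one must check the vanishing still has room, which is where the generous ``$+4$'' earns its keep.
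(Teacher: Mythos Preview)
Your skeleton---split off the non-pencil/rational-pencil case, then in the irrational-pencil case apply Kawamata--Viehweg on $T'$ to separate $S_1,S_2$ and check non-vanishing---is exactly the paper's. The substantive difference is the tool you use to pass from $|mK_{Y'}|$ down to a useful linear system on $T'$. You reach for the canonical restriction inequality \eqref{cri} and the $Q_{m,T'}$-type machinery (vanishing on $Y'$ first), leaving the coefficient bookkeeping as an acknowledged loose end. The paper instead uses Kawamata's extension theorem (Theorem~\ref{KaE}): from $K_{Y'}\geq T'$ one has $|mK_{Y'}|\succcurlyeq|(t_1+2)(K_{Y'}+T')|$ once $m\geq 2(t_1+2)$, and Theorem~\ref{KaE} gives
\[
|(t_1+2)(K_{Y'}+T')|\big|_{T'}\;\succcurlyeq\;|(t_1+2)K_{T'}|\;\succcurlyeq\;|2K_{T'}+N|\;\succcurlyeq\;|K_{T'}+\roundup{\pi_T^*(K_T)}+N|,
\]
after which the vanishing argument on $T'$ is immediate and the bound $2t_1+4$ drops out with no juggling. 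The same extension-theorem step (with $t_1+1$ in place of $t_1+2$) also supplies the missing justification in your case~(i) for $|mK_{Y'}||_{T'}\succcurlyeq|N|$. Your route via \eqref{cri} can be pushed through, but you have not shown it actually yields $2t_1+4$ rather than something slightly worse, and the error divisors $E_{1,T'}$, $E_N$ make the subtraction of $S_1+S_2$ more delicate than in the paper's argument, where one simply uses that $\pi_T^*(K_T)+(N-S_1-S_2)$ is nef and big.

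One genuine slip: your reason for $H^0(S_i,\cdots)\neq 0$ is wrong. ``$S_i$ of general type'' does not make $K_{S_i}$ effective (think $p_g(S_i)=0$). The correct reason, which the paper uses, is $p_g(T)>0$: both $K_{T'}$ and $\roundup{\pi_T^*(K_T)}$ are then linearly equivalent to effective divisors, so for a general fibre $S_i$ (which has trivial normal bundle, whence $N|_{S_i}\sim 0$) the restriction $(K_{T'}+\roundup{\pi_T^*(K_T)})|_{S_i}$ is effective.
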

\begin{proof}
Suppose $m \geq 2t_1+4$. We have $K_{Y'} \geq \pi^*(K_Y) \geq T'$. Similar to the proof of Lemma \ref{dis. 3-fold}, we consider the following two situations: (i) $|N|$ is not composed of a pencil or $|N|$ is composed of a rational pencil; (ii) $|N|$ is composed of an irrational pencil.

For (i), one has
$$|mK_{Y'}| \lsgeq |2(t_1+1)K_{Y'}| \lsgeq |(t_1+1)(K_{Y'}+T')|.$$
By Theorem \ref{KaE}, one has
$$|(t_1+1)(K_{Y'}+T')||_{T'} \lsgeq |(t_1+1)K_{T'}|.$$
As $(t_1+1)K_{T'} \geq N,$ $|mK_{Y'}||_{T'}$ can distinguish different generic irreducible elements of $|N|$.

For (ii), it holds that
\begin{eqnarray*} 
|mK_{Y'}| \lsgeq |2(t_1+2)K_{Y'}| \lsgeq |(t_1+2)(K_{Y'}+T')|.
\end{eqnarray*}
Using Theorem \ref{KaE} again, one gets
\begin{eqnarray*}
& & |(t_1+2)(K_{Y'}+T')||_{T'}\\
&\lsgeq& |(t_1+2)K_{T'}|\\
&\lsgeq& |2K_{T'}+N|\\
&\lsgeq& |K_{T'}+\roundup{\pi_T^*(K_T)}+(N-S_1-S_2)+S_1+S_2|,
\end{eqnarray*}
where $S_1$ and $S_2$ are two different generic irreducible elements of $|N|$.
The vanishing theorem implies the surjective map
\begin{eqnarray}
&& H^0(T', K_{T'}+\roundup{\pi_T^*(K_T)}+N) \notag \\
&\rightarrow& H^0(S_1, (K_{T'}+\roundup{\pi_T^*(K_T)})|_{S_1}) \label{se1}\\
&& \oplus  H^0(S_2, (K_{T'}+\roundup{\pi_T^*(K_T)})|_{S_2}), \label{se2}
\end{eqnarray}
where we note that $(N-S_{i})|_{S_i}$ is linearly trivial for $i=1,2$.
Since $p_g(T)>0$,  both groups in \eqref{se1} and \eqref{se2} are non-zero. So $|mK_{Y'}||_{T'}$ can distinguish different generic irreducible elements of $|N|$ for any $m \geq 2t_1+4$.
\end{proof}


\begin{lem}\label{ddgiec} Let $Y$ be a minimal 4-fold of general type with $p_g(Y)\geq 2$. Pick up a generic irreducible element $T'$ of $|M_1|$ and a generic irreducible element $S$ of $|N|$. Define
$$|H|=\begin{cases} \text{Mov}|K_S|, & (K_{S_0}^2, p_g(S))=(1,2)\ \text{or}\ (2,3);\\
\text{Mov}|2K_S|, & \text{otherwise}.
\end{cases}$$
Then $|mK_{Y'}||_S$ can distinguish different generic irreducible elements of $|H|$ for all $m \geq 4(t_1+1).$
\end{lem}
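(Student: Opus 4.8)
The plan is to follow the pattern of Lemmas \ref{dis. 3-fold} and \ref{ddgies}: first produce, via two successive applications of Kawamata's extension theorem, a subsystem of $|mK_{Y'}||_S$ large enough to settle every configuration of $|H|$ except one, and then dispose of the remaining configuration by a Kawamata--Viehweg vanishing that yields the separating sections. For the first part, since $m\ge 4(t_1+1)$ and $K_{Y'}\ge T'$ (as $\pi^*(K_Y)\equiv\theta_1 T'+E_1$ with $\theta_1\ge 1$), Theorem \ref{KaE} applied to $(Y',T')$ gives $|mK_{Y'}|\succcurlyeq|2(t_1+1)(K_{Y'}+T')|$, hence $|mK_{Y'}||_{T'}\succcurlyeq|2(t_1+1)K_{T'}|$; and since $S\le N\le t_1 K_{T'}$ we have $2(t_1+1)K_{T'}\ge 2(K_{T'}+S)$, so Theorem \ref{KaE} applied to $(T',S)$ gives $|mK_{Y'}||_S\succcurlyeq|2K_S|\succcurlyeq|H|$ (using $|2K_S|\succcurlyeq\text{Mov}|2K_S|$, and in the $(1,2)$- and $(2,3)$-cases also $2K_S\ge K_S\ge\text{Mov}|K_S|$, legitimate as $p_g(S)\ge 1$ there). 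Now if $|H|$ is not composed of a pencil, two general members of $|H|$ are cut out by distinct hyperplane sections of an irreducible surface image and so are distinguished by $|H|$ itself; if $|H|$ is composed of a rational pencil, the relevant rank-one push-forward sheaf on $\bP^1$ has positive degree, so its sections separate the parametrizing points and again $|H|$ distinguishes its own generic irreducible elements. In both of these cases a linear system dominating $|H|$ inherits the separation, so there is nothing more to do.

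There remains the case where $|H|$ is composed of an irrational pencil, which forces $|H|=\text{Mov}|2K_S|$ since the general member of $|K_S|$ is connected by Kawamata--Viehweg (so $|K_S|$, if composed of a pencil, is composed of a rational one). Write $S\to B$ for the induced fibration, with $g(B)\ge 1$ and $H\sim\sum_{i=1}^{\ell}C_i$, the $C_i$ general fibers and $\ell\ge 2$; as $S$ is of general type and carries no rational or elliptic fibration, $g(C_i)\ge 2$. Here domination fails: the finite part $B\to\overline{\varphi_{|H|}(S)}$ of the Stein factorization of $\varphi_{|H|}$ may be a nontrivial cover, and then $\varphi_{|H|}$ does not distinguish conjugate pairs of fibers. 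Instead I would refine the descent so as to land, on $S$, on a divisor $D$ with $D\le mK_{Y'}|_S$ and $D\sim K_S+\roundup{L}+C_1+C_2+(\text{effective})$, where $L$ is a nef and big $\bQ$-divisor on $S$ --- a suitable positive multiple of $\pi_T^*(K_T)|_S$, hence bounded below by a positive multiple of $\sigma^*(K_{S_0})$ --- obtained by carrying the canonical restriction inequalities \eqref{cri}, \eqref{cri1} through the two extension steps, after a further birational modification so that all $\bQ$-divisors in play have simple normal crossing support, the round-up comparisons $K_{T'}\ge\roundup{\pi_T^*(K_T)}$ and $K_S\ge\roundup{L}$ hold (the latter using that surface discrepancies are integral), $N|_S\sim 0$ and $K_{T'}|_S=K_S$ when $|N|$ is composed of a pencil, and $N|_S$ is absorbed into $\roundup{L}$ otherwise. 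Since the $C_i$ are mutually disjoint and $H^1(S,K_S+\roundup{L})=0$ by Kawamata--Viehweg, the standard short exact sequence yields a surjection
\[
H^0(S,D)\lrw H^0\big(C_1,D|_{C_1}\big)\oplus H^0\big(C_2,D|_{C_2}\big);
\]
by adjunction $D|_{C_i}=K_{C_i}+(\text{effective of positive degree})$, of degree at least $2g(C_i)-2\ge 2$, so both target groups are nonzero. Hence $|mK_{Y'}||_S$ contains a section vanishing along $C_2$ but not $C_1$ and one vanishing along $C_1$ but not $C_2$, which gives the desired separation of the generic irreducible elements of $|H|$.

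The main obstacle is the numerical juggling in this refined descent: the crude estimates $K_{Y'}\ge T'$, $K_{T'}\ge\pi_T^*(K_T)$, $K_S\ge\sigma^*(K_{S_0})$ alone give no more than $|mK_{Y'}||_S\succcurlyeq|2K_S|$, which leaves no room to accommodate simultaneously the two fibers $C_1+C_2$ and a full round-up $\roundup{L}$ of a nef and big $\bQ$-divisor; making the bound $m\ge 4(t_1+1)$ suffice therefore requires one to extract the extra positivity supplied by the sharp inequalities \eqref{cri}, \eqref{cri1} (taking $\theta_1=a_{t_1,T}=1$ as the worst cases) and to do so uniformly in whether $|N|$ is composed of a pencil. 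The remaining ingredients --- the bookkeeping of round-up/restriction comparisons, and the verification that in every configuration other than an irrational pencil $|H|$ genuinely distinguishes its own generic irreducible elements --- are routine.
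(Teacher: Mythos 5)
Your first half coincides with the paper's proof: two applications of Theorem \ref{KaE} give $|mK_{Y'}||_S \lsgeq |2K_S| \lsgeq |H|$ for $m\geq 4(t_1+1)$, and a system dominating $|H|$ automatically separates its generic irreducible elements when $|H|$ is not composed of a pencil or is composed of a rational pencil. The gap is in the remaining configuration. The paper disposes of it in one line: since $p_g(S)>0$, the system $|H|$ (i.e.\ $\Mov|K_S|$ or $\Mov|2K_S|$) is \emph{never} composed of an irrational pencil, so the case you labour over is vacuous. This is a standard fact in this circle of papers; for $\Mov|2K_S|$ one can argue that if it were composed of an irrational pencil $f:S\to B$ with $g(B)\geq 1$ and general fiber $C$, then all its sections are pulled back from $B$, so $P_2(S)\leq \deg\delta=a$, while $2K_S\geq M\equiv aC$ and $K_S\cdot C=2g(C)-2\geq 2$ force $a\leq K_{S_0}^2$; combined with $P_2(S)=\chi(\OO_S)+K_{S_0}^2$ this gives $\chi(\OO_S)\leq 0$, impossible for a surface of general type. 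You rule out the irrational pencil only for $\Mov|K_S|$ (via connectedness of canonical divisors) and then treat it as a live possibility for $\Mov|2K_S|$.

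Moreover, the substitute argument you propose for that case is not a proof but a plan whose central step is missing: producing inside $|mK_{Y'}||_S$, with only $m\geq 4(t_1+1)$, a divisor of the form $K_S+\roundup{L}+C_1+C_2+(\text{effective})$ with $L$ nef and big is exactly what you concede the crude restriction inequalities do not yield (``the main obstacle is the numerical juggling''), and intermediate assertions such as ``$K_S\geq \roundup{L}$ using that surface discrepancies are integral'' and the absorption of $N|_S$ into $\roundup{L}$ are stated rather than established. As written, therefore, your proposal proves the lemma only outside the irrational-pencil case; the missing ingredient is the short classical observation above, which is how the paper closes the argument.
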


\begin{proof}
Similar to the proof of Lemma \ref{ddgies}, we have
$$|mK_{Y'}||_{T'} \lsgeq |4(t_1+1)K_{Y'}||_{T'} \lsgeq |2(t_1+1)K_{T'}|.$$
Since $t_1K_{T'}\geq S$, we have
\begin{eqnarray*}
|2(t_1+1)K_{T'}||_S \lsgeq |2(K_{T'}+S)||_S \lsgeq |2K_S| \lsgeq |H|.
\end{eqnarray*}
As $p_g(S)>0$, $|H|$ is not composed of an irrational pencil, so the statement automatically follows.
\end{proof}

\subsection{Two useful propositions}\

\begin{prop}\label{birat.non12} Let $Y$ be a minimal 4-fold of general type with $p_g(Y)\geq 2$. Pick up a generic irreducible element $T'$ of $|M_1|$ and a generic irreducible element $S$ of $|N|$. If $S$ is not a $(1,2)$-surface, then $\varphi_{m,Y}$ is birational for all
$$m>(2\sqrt{2}+1)(\frac{t_1}{a_{t_1,T}}+1)(1+\frac{1}{\theta_1}).$$
\end{prop}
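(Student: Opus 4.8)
The plan is to run the birationality principle (Theorem~\ref{bir. prin.}) twice so as to descend from $Y'$ to the surface $S$, and then to deduce the birationality of the restricted system on $S$ from Chen's curve lemma (Lemma~\ref{curve lem.1}), fed by the canonical restriction inequalities \eqref{cri} and \eqref{cri1} together with the hypothesis that $S$ is not a $(1,2)$-surface. Note that with this route the system $|H|$ of Lemma~\ref{ddgiec} is not needed, since the curve lemma yields birationality of $|mK_{Y'}||_S$ directly.

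First I would record that the stated bound gives $m\geq 4$; then, checking the numerology against the thresholds of Lemmas~\ref{dis. 3-fold} and \ref{ddgies} — splitting according to whether $|M_1|$ and $|N|$ are composed of a rational or an irrational pencil, and substituting the correct values of $\theta_1$ and $a_{t_1,T}$ in each case — one sees that $m$ is large enough for $|mK_{Y'}|$ to distinguish generic irreducible elements of $|M_1|$ and for $|mK_{Y'}||_{T'}$ to distinguish generic irreducible elements of $|N|$. Applying Theorem~\ref{bir. prin.} first on $Y'$, with $A$ the base point free moving part of $|M_1|$ (the cases $\dim\Gamma\geq 2$ and $\dim\Gamma=1$ handled by parts (i) and (ii) respectively), and then on $T'$, with $A$ the base point free moving part of $|N|$, reduces the proposition to showing that $|mK_{Y'}||_S$ gives a birational map.

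To obtain that, I would chain the two canonical restriction inequalities: restricting \eqref{cri} to $S$ and then applying \eqref{cri1} yields
\[
\pi^*(K_Y)\big|_S \ \geq\ \frac{\theta_1}{1+\theta_1}\,\pi_T^*(K_T)\big|_S \ \geq\ \mu\,\sigma^*(K_{S_0}),\qquad \mu:=\frac{\theta_1}{1+\theta_1}\cdot\frac{a_{t_1,T}}{t_1+a_{t_1,T}},
\]
so that $\tfrac1\mu=\bigl(\tfrac{t_1}{a_{t_1,T}}+1\bigr)\bigl(1+\tfrac1{\theta_1}\bigr)$ and the hypothesis reads exactly $m\mu>2\sqrt2+1$. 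Next, iterating the argument behind \eqref{cri} and \eqref{cri1}, namely Theorem~\ref{KaE} applied on $Y'\supset T'$ and then on $T'\supset S$ (with Kawamata--Viehweg vanishing to take care of the round-up), one gets
\[
|mK_{Y'}|\big|_S \ \lsgeq\ \bigl|\,K_S+\roundup{L}\,\bigr|, \qquad L \simQ \lambda\,\sigma^*(K_{S_0}) \ \text{ for a rational } \lambda \text{ with } 2\sqrt2<\lambda< m\mu-1,
\]
the $\mathbb{Q}$-divisor $L$ being nef and big; such a $\lambda$ exists because $m\mu-1>2\sqrt2$. Since $S$ is not a $(1,2)$-surface, Lemma~\ref{cfc} gives $(\sigma^*(K_{S_0})\cdot C_x)\geq 2$ for every irreducible curve $C_x$ through a very general point of $S$; hence $L^2=\lambda^2 K_{S_0}^2\geq\lambda^2>8$ (using $K_{S_0}^2\geq 1$) and $(L\cdot C_x)=\lambda\,(\sigma^*(K_{S_0})\cdot C_x)\geq 2\lambda>4$. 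By Lemma~\ref{curve lem.1}, $|K_S+\roundup L|$, and therefore $|mK_{Y'}||_S$, gives a birational map; feeding this back through Theorem~\ref{bir. prin.} shows that $\varphi_{m,Y}$ is birational.

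The step I expect to be the genuine obstacle is the first one: verifying that the single clean inequality $m\mu>2\sqrt2+1$ already implies every separation threshold used in Lemmas~\ref{dis. 3-fold} and \ref{ddgies} (and the non-emptiness hypotheses of Theorem~\ref{bir. prin.}). This forces a careful bookkeeping over whether $|M_1|$ and $|N|$ are, or are not, composed of a pencil — the delicate situation being when $a_{t_1,T}$ is large, where one must exploit that $|N|$ is then composed of a pencil with many sections so that the relevant separation is in fact cheap. The remaining point, routine but requiring some care, is the passage from the numerical inequality $\pi^*(K_Y)|_S\geq\mu\,\sigma^*(K_{S_0})$ to the effective domination $|mK_{Y'}||_S\lsgeq|K_S+\roundup L|$ — that is, handling the round-up by Kawamata--Viehweg vanishing rather than by naive divisor subtraction.
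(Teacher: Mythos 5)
Your proposal follows essentially the same route as the paper's proof: Kawamata--Viehweg vanishing applied twice to restrict $|mK_{Y'}|$ down to $S$, the two canonical restriction inequalities \eqref{cri} and \eqref{cri1} to produce a nef and big $\bQ$-divisor on $S$ dominating $\lambda\sigma^*(K_{S_0})$ with $\lambda>2\sqrt{2}$, then Lemma \ref{cfc} and Lemma \ref{curve lem.1} on $S$, combined with Lemma \ref{dis. 3-fold}, Lemma \ref{ddgies} and Theorem \ref{bir. prin.}. Two minor remarks: your $L$ is not $\mathbb{Q}$-linearly equivalent to $\lambda\sigma^*(K_{S_0})$ but only dominates it up to an effective $\mathbb{Q}$-divisor (which suffices for $L^2>8$ and $(L\cdot C_x)\geq 2\lambda>4$), and the separation bookkeeping you flag as the delicate point is handled in the paper simply by citing Lemma \ref{ddgies}, i.e.\ no more carefully than you do.
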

\begin{proof}
Suppose $m>(2\sqrt{2}+1)(\frac{t_1}{a_{t_1,T}}+1)(1+\frac{1}{\theta_1})$. Since
\begin{equation*}
(m-1)\pi^*(K_Y)-T'-\frac{1}{\theta_1}E_1
\equiv (m-1-\frac{1}{\theta_1})\pi^*(K_Y)
\end{equation*}
is nef and big, and it has simple normal crossing support, Kawamata-Viehweg vanishing theorem implies
\begin{eqnarray}
|mK_{Y'}||_{T'} &\lsgeq& |K_{Y'}+\roundup{(m-1)\pi^*(K_Y)-\frac{1}{\theta_1}E_1}||_{T'}\notag\\
&\lsgeq& |K_{T'}+\roundup{\big((m-1)\pi^*(K_Y)-T'-\frac{1}{\theta_1}E_1\big)|_{T'}}|\notag \\
&=& |K_{T'}+\roundup{Q_{m,T'}}|, \label{dd111}
\end{eqnarray}
where $Q_{m,T'}=((m-1)\pi^*(K_Y)-T'-\frac{1}{\theta_1}E_1)|_{T'} \equiv (m-1-\frac{1}{\theta_1})\pi^*(K_Y)|_{T'}$ is nef and big and has simple normal crossing support.

By the canonical restriction inequality \eqref{cri}, we may write
\begin{equation*}
\pi^*(K_Y)|_{T'}\equiv \frac{\theta_1}{1+\theta_1}\pi_T^*(K_T)+E_{1, T'},
\end{equation*}
where $E_{1, T'}$ is certain effective $\bQ$-divisor. As $t_1\pi_T^*(K_T)\equiv a_{t_1,T}S+E_{N}$, one may obtain
\begin{eqnarray}
|mK_{Y'}||_S
&\lsgeq& |K_{T'}+\roundup{Q_{m,T'}-\frac{1}{a_{t_1,T}}E_N}||_S\notag\\
&\lsgeq& |K_{S}+\roundup{(Q_{m,T'}-S-\frac{1}{a_{t_1,T}}E_N)|_S}|\notag\\
&\lsgeq& |K_S+\roundup{U_{m,S}}|, \label{ddS1}
\end{eqnarray}
where
\begin{eqnarray*}
U_{m,S}&=&(Q_{m,T'}-S-\frac{1}{a_{t_1,T}}E_N-(m-1-\frac{1}{\theta_1})E_{1,T'})|_S
 \\
&\equiv&((m-1-\frac{1}{\theta_1}) \cdot \frac{\theta_1}{1+\theta_1}-\frac{t_1}{a_{t_1,T}}) \pi_T^*(K_T)|_S.
\end{eqnarray*}

As \eqref{cri1} also gives
\begin{equation*}\pi^*_T(K_T)|_S \equiv\frac{a_{t_1,T}}{t_1+a_{t_1,T}}\sigma^*(K_{S_0})+
E_{t_1,S}\label{eqS1}
\end{equation*}
for some effective $\bQ$-divisor $E_{t_1,S}$ on $S$, together with \eqref{ddS1},  one has
\begin{eqnarray*}
U_{m,S}^2
&=&(((m-1-\frac{1}{\theta_1}) \cdot \frac{\theta_1}{1+\theta_1}-\frac{t_1}{a_{t_1,T}}) \pi_T^*(K_T)|_S)^2\\
&\geq& \big(((m-\frac{\theta_1+1}{\theta_1})\cdot \frac{\theta_1}{\theta_1+1}-
\frac{t_1}{a_{t_1,T}})\cdot \frac{a_{t_1,T}}{t_1+a_{t_1,T}}\big)^2\cdot K_{S_0}^2>8,
\end{eqnarray*}
where $U_{m,S}$ is nef and big.  Hence the statement clearly follows from Lemma \ref{cfc}, Lemma \ref{curve lem.1}, Lemma \ref{dis. 3-fold}, Lemma \ref{ddgies} and Theorem \ref{bir. prin.}.
\end{proof}

\begin{prop}\label{birat.non12non23} Let $Y$ be a minimal 4-fold of general type with $p_g(Y)\geq 2$. Pick up a generic irreducible element $T'$ of $|M_1|$ and a generic irreducible element $S$ of $|N|$. If $S$ is neither a $(1,2)$-surface nor a $(2,3)$-surface, then $\varphi_{m,Y}$ is birational for all
$$m \geq 6(t_1+1).$$
\end{prop}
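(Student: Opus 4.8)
The strategy is to follow the proof of Proposition \ref{birat.non12} verbatim up to the inequality \eqref{ddS1}, and then, in the final surface-level step, to replace the estimate ``$U_{m,S}^2>8$ together with Lemma \ref{curve lem.1}'' by the birationality of the $3$-canonical map $\varphi_{3,S_0}$, which is available precisely because $S$ is neither a $(1,2)$- nor a $(2,3)$-surface. Concretely, I would first note that $m\geq 6(t_1+1)$ forces $m\geq 3$ and $m\geq 2t_1+4$, so Lemma \ref{dis. 3-fold} and Lemma \ref{ddgies} show that $|mK_{Y'}|$ distinguishes generic irreducible elements of $|M_1|$ and that $|mK_{Y'}||_{T'}$ distinguishes generic irreducible elements of $|N|$; by the birationality principle (Theorem \ref{bir. prin.}) it then suffices to prove that $\varphi_{m,Y}|_S$ is birational for a generic irreducible element $S$ of $|N|$ lying on a generic irreducible element $T'$ of $|M_1|$.

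Next I would run the Kawamata--Viehweg vanishing chain of \eqref{dd111}--\eqref{ddS1} together with the canonical restriction inequalities \eqref{cri} and \eqref{cri1} to obtain
\[
|mK_{Y'}||_S\ \lsgeq\ |K_S+\roundup{U_{m,S}}|,
\]
where $U_{m,S}$ is nef and big and, modulo an effective $\bQ$-divisor,
\[
U_{m,S}\ \geq\ \Bigl(\tfrac{m\theta_1}{\theta_1+1}-1-\tfrac{t_1}{a_{t_1,T}}\Bigr)\cdot\tfrac{a_{t_1,T}}{t_1+a_{t_1,T}}\ \sigma^*(K_{S_0}).
\]
Because $\theta_1\geq 1$, $a_{t_1,T}\geq 1$ and $t_1\geq 1$, the coefficient on the right is nondecreasing in each of $\theta_1$ and $a_{t_1,T}$, so its minimum over the admissible range is attained at $\theta_1=a_{t_1,T}=1$, where it equals $\tfrac{1}{t_1+1}\bigl(\tfrac m2-1-t_1\bigr)$; the hypothesis $m\geq 6(t_1+1)$ makes this $\geq 2$. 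Hence $\roundup{U_{m,S}}\geq 2\sigma^*(K_{S_0})$, so
\[
K_S+\roundup{U_{m,S}}\ \geq\ \sigma^*(K_{S_0})+2\sigma^*(K_{S_0})\ =\ \sigma^*(3K_{S_0}),
\]
and therefore $|mK_{Y'}||_S\lsgeq \sigma^*|3K_{S_0}|$.

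To conclude: since $S$ is neither a $(1,2)$- nor a $(2,3)$-surface, $\varphi_{3,S_0}$ is birational onto its image (Bombieri), so $\varphi_{m,Y}|_S=\varphi_{|mK_{Y'}||_S}$ is birational; by the reduction of the first paragraph, $\varphi_{m,Y}$ is birational for all $m\geq 6(t_1+1)$.

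The step I expect to require the most care is the bookkeeping that turns the numerical relation $U_{m,S}\equiv(\cdots)\,\pi_T^*(K_T)|_S$ into the honest inequality $\roundup{U_{m,S}}\geq 2\sigma^*(K_{S_0})$ of divisors: as in the proof of Proposition \ref{birat.non12} one must check that the exceptional contributions ($E_1$, $E_N$, $E_{1,T'}$, $E_{t_1,S}$ and the discrepancy of $\sigma$) are absorbed into the round-ups, that the intermediate $\bQ$-divisors of the vanishing chain are genuinely nef and big whenever $m\geq 6(t_1+1)$, and --- in the case $p_g(S_0)=0$, where $\sigma^*(K_{S_0})$ is not effective --- that one still lands inside $\sigma^*|3K_{S_0}|$ (which is a non-empty, base point free system). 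None of this looks to present a genuine obstacle beyond a short inspection of the small values of $\theta_1$ and $a_{t_1,T}$.
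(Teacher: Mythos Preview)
Your reduction (first paragraph) and the numerical computation of the coefficient in front of $\sigma^*(K_{S_0})$ are both fine, but the step you yourself flag is a genuine gap, not just bookkeeping. After the vanishing chain \eqref{dd111}--\eqref{ddS1} you only know that $U_{m,S}$ is \emph{numerically} (or $\bQ$-linearly) equivalent to $c\cdot\pi_T^*(K_T)|_S$, hence to $c'\sigma^*(K_{S_0})+c\,E_{t_1,S}$; you never get $U_{m,S}\geq c'\sigma^*(K_{S_0})$ as an actual inequality of $\bQ$-divisors. Consequently there is no reason why the specific integral divisor $\roundup{U_{m,S}}$ should dominate $2\sigma^*(K_{S_0})$: peeling off one more effective piece $c\,E_{t_1,S}$ and rounding up again still leaves you with a divisor that is only $\bQ$-linearly equivalent to $c'\sigma^*(K_{S_0})$, and $(c'-2)\sigma^*(K_{S_0})$ is a $\bQ$-divisor which need not be effective (and is not even a divisor class with a section when $p_g(S_0)=0$). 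So the implication $|K_S+\roundup{U_{m,S}}|\lsgeq\sigma^*|3K_{S_0}|$ does not follow from what you have. This is exactly why Proposition~\ref{birat.non12} ends with the \emph{numerical} criterion of Lemma~\ref{curve lem.1} rather than a comparison of linear systems.

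The paper's proof proceeds differently and sidesteps this issue entirely: instead of the vanishing chain it applies Kawamata's extension theorem (Theorem~\ref{KaE}) twice. From $m\geq 6(t_1+1)$ and $K_{Y'}\geq T'$ one gets $|mK_{Y'}|\lsgeq|3(t_1+1)(K_{Y'}+T')|$, and Theorem~\ref{KaE} gives $|3(t_1+1)(K_{Y'}+T')||_{T'}\lsgeq|3(t_1+1)K_{T'}|$. Then from $t_1K_{T'}\geq S$ one gets $|3(t_1+1)K_{T'}|\lsgeq|3(K_{T'}+S)|$, and a second application of Theorem~\ref{KaE} yields $|3(K_{T'}+S)||_S=|3K_S|$. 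Thus $|mK_{Y'}||_S\lsgeq|3K_S|$ as an honest containment of linear systems, and Bombieri's theorem on $|3K_S|$ applies directly. The moral is that when the target is a \emph{specific} linear system like $|3K_S|$, the extension theorem is the right tool; the vanishing chain is suited to numerical birationality criteria, not to $\lsgeq$-comparisons.
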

\begin{proof}
Suppose $m \geq 6(t_1+1)$. Following the same procedures as in the proof of Lemma \ref{ddgies} and Lemma \ref{ddgiec}, one has
$$|mK_{Y'}| \lsgeq |3(t_1+1)(K_{Y'}+T')|$$
and
$$|mK_{Y'}||_{T'} \lsgeq |3(t_1+1)K_{T'}|.$$
Furthermore, one has
$$|mK_{Y'}||_S
\lsgeq |3(t_1+1)K_{T'}||_S
\lsgeq |3(K_{T'}+S)||_S = |3K_S|.$$
By virtue of Bombieri's result in \cite{Bom} that $|3K_S|$ gives a birational map unless $S$ is a $(1,2)$-surface or a $(2,3)$-surface, together with Lemma \ref{dis. 3-fold}, Lemma \ref{ddgies} and Theorem \ref{bir. prin.}, the statement holds.
\end{proof}

\subsection{The case of  $\dim(\Gamma) \geq 2$}\label{dim>1}
\ 

We follow Chen-Chen's approach in \cite[Theorem 8.2]{EXP3} to deal with the case of  $\dim(\Gamma) \geq 2$.

\begin{thm}(\cite[Theorem 8.2]{EXP3}) \label{thm(dim2)}
Let $Y$ be a minimal 4-fold of general type with $p_g(Y)\geq 2$. Assume that $\dim(\Gamma) \geq 2$, then $\varphi_{m,Y}$ is birational for all $m\geq 15$.
\end{thm}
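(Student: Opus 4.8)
The plan is to reduce everything to birationality statements on the generic irreducible surface $S$ of $|N|$, exploiting the fact that $\dim(\Gamma)\geq 2$ forces $\theta_1=1$, which makes all the coefficients in the canonical restriction inequalities more favorable. Concretely, since $\dim(\Gamma)\geq 2$ we have $\theta_1=1$, so \eqref{cri} reads $\pi^*(K_Y)|_{T'}\geq\frac12\pi_T^*(K_T)$; combined with \eqref{cri1} this chains down to a clean lower bound for $\pi^*(K_Y)|_S$ in terms of $\sigma^*(K_{S_0})$. The first step is to invoke Proposition \ref{birat.non12non23}: when $S$ is neither a $(1,2)$- nor a $(2,3)$-surface, $\varphi_{m,Y}$ is birational for $m\geq 6(t_1+1)$. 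So the whole problem is to bound $t_1$ and to handle the two exceptional surface types separately.

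Second, I would bound $t_1$. Recall $t_1$ is the least integer with $P_{t_1}(T)\geq 2$, where $T$ is a minimal model of a generic irreducible element $T'$ of $|M_1|$ — a minimal $3$-fold of general type. Since $p_g(Y)\geq 2$, the divisor $T'$ moves in a base point free system and $\pi^*(K_Y)|_{T'}$ is effective; using the canonical restriction inequality and the fact that $K_{Y'}\geq\pi^*(K_Y)\geq T'$, Kawamata–Viehweg vanishing gives surjections $H^0(Y',mK_{Y'})\to H^0(T',mK_{T'})$ for suitable $m$, which forces $P_m(T)\geq 2$ once $m$ is a small explicit multiple. One expects $t_1\leq 2$ in the relevant cases (and $t_1=1$ when $p_g(T)\geq 2$). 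Then Proposition \ref{birat.non12non23} already yields birationality for $m\geq 6(t_1+1)$, i.e. $m\geq 12$ or $m\geq 18$ — so for the target bound $m\geq 15$ the case $t_1=1$ is immediate, and one must work harder only when $t_1=2$, i.e. when $p_g(T)=0$ but $P_2(T)\geq 2$; here $\dim(\Gamma)\geq 2$ and the structure of $|M_1|$ (not composed of a pencil) should let one improve the multiples, using that $|M_1|$ distinguishes generic elements (Lemma \ref{dis. 3-fold}) and Theorem \ref{bir. prin.}(i).

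Third, handle $S$ a $(1,2)$- or $(2,3)$-surface. For these, Proposition \ref{birat.non12} is not available (in the $(1,2)$ case), so instead I would follow the Chen–Chen strategy of \cite[Theorem 8.2]{EXP3}: use Lemma \ref{ddgiec} to see that $|mK_{Y'}||_S$ distinguishes different generic irreducible elements of $|H|=\mathrm{Mov}|K_S|$ for $m\geq 4(t_1+1)$, restrict once more to a generic curve $C\in|H|$, and estimate $\xi=(\pi_T^*(K_T)\cdot C)_{T'}$ and $\beta$ from below using \eqref{cri}, \eqref{cri1} and the known geometry of $(1,2)$- and $(2,3)$-surfaces (where $|K_{S_0}|$ is a genus-$2$ pencil resp. maps $2:1$ onto a conic, so $(\sigma^*K_{S_0}\cdot C)$ is pinned down). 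The birationality principle Theorem \ref{bir. prin.}(ii) on the pencil $|H|$, together with the fact that on a curve $C$ of genus $\geq 2$ a complete linear system of degree $\geq 2g(C)+1$ is very ample, then closes the argument for an explicit $m$; one checks $m\geq 15$ suffices with $\theta_1=1$ and the available bound on $t_1$.

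The main obstacle is the $(1,2)$-surface case combined with $t_1=2$: here every intermediate coefficient $\frac{\theta_1}{\theta_1+1}$, $\frac{a_{t_1,T}}{t_1+a_{t_1,T}}$ is as small as it can be, $K_{S_0}^2=1$ is minimal, and the genus-$2$ pencil on $S_0$ contributes badly to the curve estimates, so the naive bounds overshoot $15$. The fix I expect to need is a more careful two-step restriction — first bounding $\xi$ from below (e.g. $\xi\geq 1$, improvable via the structure of $|N|$ and the non-pencil hypothesis $\dim\Gamma\geq 2$), then feeding this back through a Kawamata–Viehweg argument on $T'$ to gain an extra half-integer in the coefficient — exactly the kind of bootstrapping used in \cite{EXP3}. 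Absorbing the rounding-up terms $\roundup{\cdot}$ correctly, and verifying that the distinguishing lemmas (Lemmas \ref{dis. 3-fold}, \ref{ddgies}, \ref{ddgiec}) apply at the chosen $m$, will be the bookkeeping-heavy part, but the conceptual content is entirely the reduction-to-surface plus the classical surface birationality input.
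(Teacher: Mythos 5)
There is a genuine gap, and it sits exactly at the point where the hypothesis $\dim(\Gamma)\geq 2$ has to be used. The paper's proof exploits this hypothesis structurally: since $|M_1|$ is not composed of a pencil, its restriction $|M_1|_{T'}|$ to a general member $T'$ is still a moving system, and since $K_{T'}=(K_{Y'}+T')|_{T'}$ one gets $\Mov|K_{T'}|\lsgeq |M_1|_{T'}|$; hence $p_g(T)\geq 2$, so $t_1=1$, and, taking $S$ to be a generic irreducible element of $|M_1|_{T'}|$ itself, even $K_{T'}\geq 2S$ (inequality \eqref{beq}). Theorem \ref{KaE} then yields the improved bound $\pi_T^*(K_T)|_S\geq\frac{2}{3}\sigma^*(K_{S_0})$, whence $\pi^*(K_Y)|_S\geq\frac13\sigma^*(K_{S_0})$, and a single application of Kawamata--Viehweg vanishing plus Lemma \ref{curve lem.1} to $R'_{m,S}\equiv (m-3)\pi^*(K_Y)|_S\geq\frac{m-3}{3}\sigma^*(K_{S_0})$ gives birationality of $|mK_{Y'}||_S$ for $m\geq15$ uniformly, with no case division on the surface type (for $m\geq 15$ one has $(R'_{m,S})^2\geq 16>8$ and $R'_{m,S}\cdot C_x\geq 4$ through very general points, so $(1,2)$- and $(2,3)$-surfaces are covered automatically). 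Your proposal never identifies this consequence of $\dim(\Gamma)\geq 2$: you assert ``one expects $t_1\leq 2$'' via a vague surjectivity argument, treat a hypothetical case $p_g(T)=0$, $t_1=2$ as live without resolving it, and miss that in fact $t_1=1$ for free; without that, your bound on $t_1$ is unproved and the whole reduction is not closed.

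The second unresolved point is the one you yourself flag: for $S$ a $(1,2)$-surface your route (restrict further to $C\in|H|$, estimate $\beta$ and $\xi$, apply Theorem \ref{bir. prin.}(ii)) admittedly ``overshoots $15$'' with the coefficients you have, and the proposed fix is an unspecified ``bootstrapping.'' With only the generic coefficients $\frac{\theta_1}{\theta_1+1}=\frac12$ and $\frac{a_{t_1,T}}{t_1+a_{t_1,T}}=\frac12$ and the $(\beta,\xi)$ lists from \cite{MA}, the curve-level bound lands at $m\geq16$ in the worst subcase, which misses the stated $15$; the extra factor needed is precisely the $\frac23$ coming from $K_{T'}\geq 2S$, i.e. again the structural use of $\dim(\Gamma)\geq2$ that your argument omits. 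So while your overall philosophy (reduce to $S$, use Propositions \ref{birat.non12non12}--type results for generic surfaces and a separate analysis for $(1,2)$/$(2,3)$) is in the right spirit, as written it neither pins down $t_1$ nor closes the exceptional surface cases, whereas the paper's choice of $S$ inside $|M_1|_{T'}|$ together with Lemma \ref{curve lem.1} makes both difficulties disappear.
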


\begin{proof}
By Theorem \ref{bir. prin.}, we may just consider $\varphi_{m,Y'}|_{T'}$ for a general member $T' \in |M_1|$.
As we have $\theta_1=1$, \eqref{cri} gives
\begin{equation} \pi^*(K_Y)|_{T'} \geq \frac{1}{2}\pi_T^*(K_T). \label{EY}\end{equation}
Modulo some birational modifications, we may assume that $|M_1|_{T'}|$ is a base point free linear system.
Pick up a generic irreducible element $S$  of $|M_1|_{T'}|$.  It follows that
$$\pi^*(K_Y)|_{T'} \geq M_1|_{T'} \geq S.$$
Modulo $\mathbb{Q}$-linear equivalence, we have
\begin{equation}\label{beq}
K_{T'} \geq (\pi^*(K_Y)+T')|_{T'} \geq 2S.
\end{equation}
Using Theorem \ref{KaE}, we get
\begin{equation} \pi_T^*(K_T)|_S \geq \frac{2}{3} \sigma^*(K_{S_0}).\label{ET}\end{equation}
Thus, combining \eqref{EY} and \eqref{ET}, one gets
$$\pi^*(K_Y)|_S \geq \frac{1}{3}\sigma^*(K_{S_0}).$$
By \eqref{dd111}, we already have
$$|mK_{Y'}||_{T'} \lsgeq |K_{T'}+\roundup{Q_{m,T'}}|,$$
where $Q_{m,T'}=((m-1)\pi^*(K_Y)-T'-\frac{1}{\theta_1}E_1)|_{T'} \equiv (m-2)\pi^*(K_Y)|_{T'}$.
As $\pi^*(K_Y)|_{T'} \equiv S+E_S$ for some effective $\mathbb{Q}$-divisor $E_S$ on $T'$ and
$$Q_{m,T'}-S-E_S \equiv (m-3)\pi^*(K_Y)|_{T'}$$
is nef and big, Kawamata-Viehweg vanishing theorem implies
\begin{eqnarray*}
|mK_{Y'}||_S
&\lsgeq& |K_{T'}+\roundup{Q_{m,T'}-E_S}||_S \\
&\lsgeq& |K_S+\roundup{R_{m,S}'}|,
\end{eqnarray*}
where
\begin{eqnarray*}
R_{m,S}'&=& (Q_{m,T'}-S-E_S)|_S\\
&\equiv& (m-3)\pi^*(K_Y)|_S.
\end{eqnarray*}
Since $R_{m,S}' \equiv \frac{m-3}{3}\sigma^*(K_{S_0})+E_{m,S}'$, where $E_{m,S}'$ is an effective $\mathbb{Q}$-divisor on $S$, by Lemma \ref{curve lem.1},  $|K_S+\roundup{R_{m,S}'}|$ gives a birational map whenever $m \geq 15$.

Since $\text{Mov}|K_{T'}| \lsgeq |M_1|_{T'}|$, we may take $t_1=1$ and by the proof of Lemma \ref{ddgies} we know that $|mK_{Y'}|$ distinguishes different generic irreducible elements of $|M_1|_{T'}|$ for $m\geq 6$.
Therefore, $\varphi_{m,Y}$ is birational for all $m\geq 15$ in this case.
\end{proof}

\subsection{The case of $\dim(\Gamma)=1$} \label{dim=1}
\ 

\begin{thm} \label{thm(dim1)}
Let $Y$ be a minimal 4-fold of general type with $p_g(Y) \geq 2$. Assume that $\dim(\Gamma)=1$, then $\varphi_{m,Y}$ is birational for all $m \geq 33$.
\end{thm}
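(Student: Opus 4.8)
The plan is to exploit the fibration structure $f_1 : Y' \to \Gamma$ with $\Gamma$ a curve, so that a general fiber $T'$ is a generic irreducible element of $|M_1|$ and $\theta_1 = b = \deg {f_1}_*\OO_{Y'}(M_1) \geq p_g(Y) - 1 \geq 1$. The strategy splits according to the value of $b$ and the geometry of the generic irreducible surface $S$ of $|N|$ sitting inside $T'$. When $b \geq 2$ the term $\frac{\theta_1}{1+\theta_1} = \frac{b}{b+1}$ in the canonical restriction inequality \eqref{cri} is large, so Proposition \ref{birat.non12} already gives birationality of $\varphi_{m,Y}$ for a rather small $m$ whenever $S$ is not a $(1,2)$-surface; similarly Proposition \ref{birat.non12non23} and Theorem \ref{thm(dim1)}'s numerology handle $b$ large enough even in the worst surface cases. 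So the core difficulty is concentrated in the case $b = 1$ (equivalently $p_g(Y) = 2$, $\Gamma$ rational or irrational of genus giving $b=1$), combined with $S$ being a $(1,2)$-surface, which is exactly the configuration that forces a large threshold.

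First I would record the inequalities coming from the setup: from \eqref{cri}, $\pi^*(K_Y)|_{T'} \geq \frac{b}{b+1}\pi_T^*(K_T)$, and from \eqref{cri1}, $\pi_T^*(K_T)|_S \geq \frac{a_{t_1,T}}{t_1 + a_{t_1,T}}\sigma^*(K_{S_0})$, so that composing gives a bound $\pi^*(K_Y)|_S \geq \mu\,\sigma^*(K_{S_0})$ with $\mu = \frac{b}{b+1}\cdot\frac{a_{t_1,T}}{t_1 + a_{t_1,T}}$. Via the vanishing-theorem chain as in \eqref{dd111}–\eqref{ddS1} one reduces $|mK_{Y'}||_S$ to $|K_S + \roundup{R_{m,S}}|$ with $R_{m,S} \equiv \lambda\,\sigma^*(K_{S_0})$ for an explicit $\lambda$ linear in $m$, and then Lemma \ref{curve lem.1} (using $\sigma^*(K_{S_0})^2 \geq K_{S_0}^2 \geq 1$ and Lemma \ref{cfc} for the curve-intersection hypothesis when $S$ is not a $(1,2)$-surface) yields birationality once $\lambda$ is large enough. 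For the $(1,2)$-surface case, Lemma \ref{cfc} fails, so instead I would use the refined surface technique: pass to $|H| = \Mov|K_S|$, let $C$ be a generic irreducible element (a genus-$2$ curve), control $\xi = (\pi_T^*(K_T)\cdot C)$ and $\beta = \beta(t_1,|N|,|H|)$, and run the standard three-step argument (distinguishing the $T'$'s by Lemma \ref{dis. 3-fold}, the $S$'s by Lemma \ref{ddgies}, the $C$'s by Lemma \ref{ddgiec}) together with the weak-form birationality criterion on the curve $C$: once $mK_{Y'}|_C$ has degree $> 2$ one gets birationality of $\varphi_{m,Y}|_C$, hence of $\varphi_{m,Y}$. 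This forces an inequality of the shape $m\cdot(\text{something involving }\beta, \xi, b, t_1, a_{t_1,T}) > 2$, and one optimizes over the admissible discrete data.

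The genuine obstacle is bookkeeping the interplay of the parameters $b$, $t_1$, $a_{t_1,T}$, $\beta$ and $\xi$ in the $b=1$, $S$ a $(1,2)$-surface sub-case, and showing every admissible combination is cleared by $m = 33$. Concretely I expect one must split on $t_1$: for $t_1 = 1$ one has a comparatively clean bound; for $t_1 \geq 2$ (i.e.\ $P_2(T) \leq 1$), $T$ itself behaves like a threefold with small plurigenera and one needs the Chen–Chen type lower bounds on $\xi$ and the ``$\beta$ improvement'' trick — bounding $\xi$ from below by combining $(\pi_T^*(K_T)|_S \cdot C) \geq \beta C^2$-type estimates with $C^2 \geq$ the relevant value for a genus-$2$ curve on a $(1,2)$-surface, then feeding the improved $\xi$ back to improve $\beta$, iterating until stable. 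One also has to separately verify the two distinguishing statements (Lemma \ref{ddgies}, Lemma \ref{ddgiec}) apply for $m = 33$, which they do since $33 \geq 2t_1 + 4$ and $33 \geq 4(t_1+1)$ precisely when $t_1$ is small, and the large-$t_1$ ranges are instead killed by the volume/numerical bounds forcing $t_1$ to be bounded. Assembling these sub-cases, taking the maximum of the resulting thresholds, and checking it equals $33$ is the crux; the rest is the routine vanishing-and-restriction machinery already set up in Section \ref{pre}.
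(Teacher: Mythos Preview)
Your overall framework---restricting successively to $T'$, $S$, $C$ via Kawamata--Viehweg vanishing and then checking $\deg \mathcal{D}_m > 2$ on the curve---is exactly the paper's machinery, and your birationality criterion
\[
m \geq 4t_1+4 \quad \text{and} \quad m > \frac{4}{\xi} + \frac{2t_1}{a_{t_1,T}} + \frac{2}{\beta} + 2
\]
is also the one the paper derives. But the case analysis you outline does not lead to the bound $33$, and the gap is substantive rather than bookkeeping.

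First, splitting on $b=\theta_1$ is a red herring. Since the threshold is monotone decreasing in $\theta_1$, one simply takes the worst case $\theta_1=1$ throughout; the paper never distinguishes $b=1$ from $b\geq 2$. Your claim that $b\geq 2$ plus Proposition~\ref{birat.non12} already handles the non-$(1,2)$ surface case is false: with $t_1=5$, $a_{t_1,T}=1$, $\theta_1=2$ one gets $(2\sqrt{2}+1)\cdot 6\cdot \tfrac{3}{2} > 34$, so $m=33$ is not covered. The correct dichotomy is on $p_g(T)$: if $p_g(T)\geq 2$ then $t_1=1$ and the $(\beta,\xi)$ table from \cite{MA} gives $m\geq 16$; the hard case is $p_g(T)=1$.

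Second, and more seriously, when $p_g(T)=1$ you need two external inputs that your proposal does not supply. One is the classification \cite[Corollary 4.10]{EXP3}, which forces either $P_4(T)\geq 2$ (so $t_1\leq 4$) or $P_4(T)=1$, $P_5(T)\geq 3$ (so $t_1=5$); without this there is no a priori bound on $t_1$, and your remark that ``large-$t_1$ ranges are killed by volume/numerical bounds'' is not justified. The other is the list of admissible $(\beta,\xi)$ pairs from \cite[Propositions 4.1--4.7]{CHP}, obtained by a substantial case-by-case study of threefolds with $p_g=1$. The number $33$ is dictated precisely by the pair $(\beta,\xi)=(\tfrac{1}{5},\tfrac{5}{13})$ at $t_1=5$, $a_{t_1,T}=1$, giving $\tfrac{4}{\xi}+\tfrac{2t_1}{a_{t_1,T}}+\tfrac{2}{\beta}+2 = \tfrac{52}{5}+10+10+2 = 32.4$. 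Your proposed ``$\beta$-improvement iteration'' on a $(1,2)$-surface, starting from generic estimates, will not produce these sharp values; it amounts to reproving large parts of \cite{CHP}, and you give no indication of how to carry this out or why it terminates at $33$ rather than some larger number.
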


\begin{proof} We have $\theta_1 \geq 1$ and $p_g(T')>0$. By Lemma \ref{dis. 3-fold}, $|mK_{Y'}|$ distinguishes different generic irreducible elements of $|M_1|$ for all $m \geq 3$.

As an overall discussion, we study the linear system $|mK_{Y'}||_C$ for generic irreducible element $C$ of $|H|$.  Recall that, by \eqref{dd111} and \eqref{ddS1}, we already have
$$|mK_{Y'}||_S\lsgeq |K_S+\roundup{U_{m,S}}|,$$
where $U_{m,S}\equiv \big((m-1-\frac{1}{\theta_1})\frac{\theta_1}{\theta_1+1}-\frac{t_1}{a_{t_1,T}}\big)\pi_T^*(K_T)|_S$ is a nef and big $\bQ$-divisor on $S$. As we have $\pi_T^*(K_T)|_S \sim \beta C+E_H$ for some effective $\bQ$-divisor $E_H$ on $S$, applying Kawamata-Viehweg vanishing theorem, we may get
\begin{eqnarray}
|mK_{Y'}||_C&\lsgeq&|K_S+\roundup{U_{m,S}-\frac{1}{\beta}E_H}||_C \notag \\
&=& |K_C+\roundup{U_{m,S}-C-\frac{1}{\beta}E_H}|_C| \notag \\
&=&|K_C+\mathcal{D}_m|, \label{rtc}
\end{eqnarray}
where $\mathcal{D}_m=\roundup{U_{m,S}-C-\frac{1}{\beta}E_H}|_C$ with
$$\deg \mathcal{D}_m\geq \big((m-1-\frac{1}{\theta_1})\frac{\theta_1}{\theta_1+1}-\frac{t_1}{a_{t_1,T}}-\frac{1}{\beta}\big)(\pi_T^*(K_T)|_S\cdot C).$$
Thus, whenever $m>\big(\frac{2}{\xi}+\frac{t_1}{a_{t_1,T}}+\frac{1}{\beta}+1\big)\cdot
\frac{\theta_1+1}{\theta_1}$, $|mK_{Y'}||_C$ gives a birational map.

Therefore, by Lemma \ref{ddgies}, Lemma \ref{ddgiec} and Theorem \ref{bir. prin.},
$\varphi_{m,Y}$ is birational provided that
\begin{equation*}\label{lb}
m \geq 4t_1+4  ~\text{and}~  m>\frac{4}{\xi}+\frac{2t_1}{a_{t_1,T}}+\frac{2}{\beta}+2.
\end{equation*}

Now we study this problem according to the value of $p_g(T)$.

\medskip

{\bf Case 1}.  $p_g(T) \geq 2$

Clearly, we may take $t_1=1$ and so $a_{t_1,T}=1$. From \cite[Section 2, Section 3]{MA}, we know that one of the cases occur:
\begin{enumerate}
\item[(1)] $\beta=1$, $\xi\geq 1$; (correspondingly, $d\geq 2$ or $d=1$ and $b>0$ in \cite{MA})
\item[(2)] $\beta=\frac{1}{4}$, $\xi\geq \frac{5}{4}$; (correspondingly, $d=1$ and $(1,1)$-surface case in \cite{MA})
\item[(3)] $\beta=\frac{1}{2}$, $\xi\geq \frac{2}{3}$; (correspondingly, $d=1$ and $(1,2)$-surface case in \cite{MA})
\item[(4)] $\beta=\frac{1}{2}$, $\xi\geq 1$; (correspondingly, $d=1$ and $(2,3)$-surface case in \cite{MA})
\item[(5)] $\beta=\frac{1}{4}$, $\xi\geq 2$. (correspondingly, $d=1$ and other surface case in \cite{MA})
\end{enumerate}

So $\varphi_{m,Y}$ is birational for all $m\geq 16$.
\medskip

{\bf Case 2}.  $p_g(T)=1$

According to \cite[Corollary 4.10]{EXP3}, $T$ must be of one of the following types: (i) $P_4(T)=1, P_5(T) \geq 3$; (ii) $P_4(T) \geq 2$.

For Type (i), we have $t_1=5$ and set $|N|=\Mov|5K_T|$.  When $|N|$ is composed of a pencil, we have $a_{t_1,T}\geq 2$ and $S$ is exactly the general fiber of the induced fibration from $\varphi_{t_1, T} \circ \pi_T$.  If $S$ is not a $(1,2)$-surface, by Proposition \ref{birat.non12}, $\varphi_{m,Y}$ is birational for all $m\geq 27$. If $S$ is a $(1,2)$-surface, by \cite[Proposition 4.1, Case 2]{CHP}, we have  $\beta\geq \frac{2}{7}$ and $\xi\geq \frac{2}{7}$, and hence $\varphi_{m,Y}$ is birational for $m\geq 28$.
When $|N|$ is not composed of a pencil, we have $a_{t_1,T}\geq 1$. Refer to the case by case argument of \cite[Proposition 4.2, Proposition 4.3]{CHP}, to give an exact list, $(\beta,\xi)$ must be among one of the situations:
$(1/5,3/7)$, $(1/5,2/3)$, $(1/3,1/3)$, $(1/5,5/13)$, $(1/5,1)$, $(1/2,1/3)$, $(1/5,1/2)$, $(2/5,1/3)$, $(1/4,1/3)$. Hence $\varphi_{m,Y}$ is birational for all $m\geq 33$.

For Type (ii), we have $t_1=4$ and set $|N|=\Mov|4K_T|$. When $|N|$ is composed of a pencil and the generic irreducible element $S$ of $|N|$ is neither a $(1,2)$-surface nor a $(2,3)$-surface, by Proposition \ref{birat.non12non23}, $\varphi_{m,Y}$ is birational for all $m\geq 30$.  When $P_4(T)=h^0(T,\mathcal{O}_T(4K_T))=2$ and $|N|$ is composed of a rational pencil of $(1,2)$-surfaces, the case by case argument of \cite[Proposition 4.6, Proposition 4.7]{CHP} tells that $(\beta,\xi)$ must be among one of the situations: $(2/7,2/7)$, $(1/5,2/5)$, $(2/5,2/7)$, $(1/5, 1/3)$, $(1/5, 2/3)$, $(1/5, 5/12)$, $(1/3, 2/7)$, $(1/4, 2/7)$.  Hence $\varphi_{m,Y}$ is birational for all $m\geq 33$. Otherwise, the case by case argument of \cite[Proposition 4.5]{CHP} tells that $(\beta,\xi)$ must be among one of the situations: $(1/4,2/5)$, $(1/5,2/5)$, $(1/3,1/3)$. Hence $\varphi_{m,Y}$ is birational for all $m\geq 31$.

In conclusion, $\varphi_{m, Y}$ is birational for all $m \geq 33$.
\end{proof}

\subsection{The canonical volume of 4-folds}

\

\begin{thm} \label{volume}
Let $Y$ be a minimal 4-fold of general type with $p_g(Y) \geq 2$. Then $\text{Vol}(Y) \geq \frac{1}{520}$.
\end{thm}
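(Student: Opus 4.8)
The plan is to bound $\Vol(Y)=K_Y^4=(\pi^*(K_Y))^4$ from below by running exactly the fibration reduction of Section~\ref{pre}, and then reading off the numerical input from the case analyses already carried out in Theorem~\ref{thm(dim2)} and Theorem~\ref{thm(dim1)}.

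I would first dispose of the case $\dim(\Gamma)\geq 2$, which is the easy one. Here $\theta_1=1$, and the proof of Theorem~\ref{thm(dim2)} already provides $(\pi^*(K_Y))|_{T'}\geq \tfrac12\pi_T^*(K_T)$, $(\pi^*(K_Y))|_{T'}\geq S$ for a generic irreducible element $S$ of $|M_1|_{T'}|$, and $\pi_T^*(K_T)|_S\geq \tfrac23\sigma^*(K_{S_0})$. Since $\pi^*(K_Y)\equiv T'+E_1$ with $E_1$ effective, the projection formula together with the elementary fact that a self-intersection of a nef divisor against an effective divisor (or against another nef class) is nonnegative gives
\[
K_Y^4\ \geq\ \big((\pi^*(K_Y))|_{T'}\big)^3\ \geq\ \tfrac14\,(\pi_T^*(K_T))^2\cdot S\ =\ \tfrac14\big(\pi_T^*(K_T)|_S\big)^2\ \geq\ \tfrac19\,K_{S_0}^2\ \geq\ \tfrac19 ,
\]
which is far above $\tfrac1{520}$.

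The main case is $\dim(\Gamma)=1$. Here the goal is the inequality
\[
K_Y^4\ =\ (\pi^*(K_Y))^4\ \geq\ \theta_1\Big(\tfrac{\theta_1}{\theta_1+1}\Big)^3\frac{a_{t_1,T}}{t_1}\,\beta\,\xi ,
\]
obtained by the chain $K_Y^4\geq\theta_1\big((\pi^*(K_Y))|_{T'}\big)^3$ (from $\pi^*(K_Y)\equiv\theta_1T'+E_1$), then $\big((\pi^*(K_Y))|_{T'}\big)^3\geq\big(\tfrac{\theta_1}{\theta_1+1}\big)^3(\pi_T^*(K_T))^3$ (from the canonical restriction inequality \eqref{cri}), then $(\pi_T^*(K_T))^3\geq\tfrac{a_{t_1,T}}{t_1}(\pi_T^*(K_T))^2\cdot S=\tfrac{a_{t_1,T}}{t_1}(\pi_T^*(K_T)|_S)^2$ (from $t_1\pi_T^*(K_T)\equiv a_{t_1,T}S+E_N$), and finally $(\pi_T^*(K_T)|_S)^2\geq\beta\xi$ (from $\pi_T^*(K_T)|_S\geq\beta C$ and $\xi=(\pi_T^*(K_T)\cdot C)_{T'}$). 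Every step is an application of the positivity statement above, which on a smooth threefold reduces, after restriction to the components of the effective divisors that get dropped, to $D_1\cdot D_2\geq 0$ for nef divisors $D_1,D_2$ on a projective surface. Because $x\mapsto x\big(\tfrac{x}{x+1}\big)^3=\tfrac{x^4}{(x+1)^3}$ is increasing on $(0,\infty)$ and equals $\tfrac18$ at $x=1$ while $\theta_1\geq 1$, this gives $K_Y^4\geq \tfrac18\cdot\tfrac{a_{t_1,T}}{t_1}\,\beta\,\xi$.

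It then remains to substitute the data tabulated in the proof of Theorem~\ref{thm(dim1)}. When $p_g(T)\geq 2$ one has $t_1=1$, $\tfrac{a_{t_1,T}}{t_1}\geq 1$, and the worst listed pair is $(\beta,\xi)=(\tfrac14,\tfrac54)$, giving $K_Y^4\geq\tfrac5{128}$. When $p_g(T)=1$ with $P_4(T)\geq 2$ one has $t_1=4$; reading off either the listed $(\beta,\xi)$ or, in the sub-cases covered by Propositions~\ref{birat.non12} and \ref{birat.non12non23}, the bound $(\pi_T^*(K_T)|_S)^2\geq\big(\tfrac{a_{t_1,T}}{t_1+a_{t_1,T}}\big)^2K_{S_0}^2$ that comes from \eqref{cri1}, every sub-case stays well above $\tfrac1{520}$. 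When $p_g(T)=1$ with $P_4(T)=1$, $P_5(T)\geq 3$ one has $t_1=5$; here the unique borderline situation is $|N|=\Mov|5K_T|$ not composed of a pencil with $a_{t_1,T}=1$ and $(\beta,\xi)=(\tfrac15,\tfrac5{13})$, for which $K_Y^4\geq\tfrac18\cdot\tfrac15\cdot\tfrac15\cdot\tfrac5{13}=\tfrac1{520}$, while all other sub-cases (the $(1,2)$-surface case included, where $\beta,\xi\geq\tfrac27$) are strictly better. Taking the minimum over all cases yields $\Vol(Y)\geq\tfrac1{520}$, and then Theorem~\ref{main theorem}(2) follows from the birational invariance of the volume. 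The one genuine difficulty is bookkeeping: one must verify that every branch of the proof of Theorem~\ref{thm(dim1)} meets the bound — in particular the branches where only an estimate on $K_{S_0}^2$, rather than on the pair $(\beta,\xi)$, was recorded, and the extremal $(\tfrac15,\tfrac5{13})$ branch — and that the positivity statement invoked at each link of the displayed chains is legitimately applied on $T'$ and on $S$.
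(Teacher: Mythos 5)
Your reduction is exactly the paper's: the chain $K_Y^4\geq\theta_1\big(\pi^*(K_Y)|_{T'}\big)^3\geq\theta_1\big(\tfrac{\theta_1}{\theta_1+1}\big)^3(\pi_T^*(K_T))^3\geq\theta_1\big(\tfrac{\theta_1}{\theta_1+1}\big)^3\tfrac{a_{t_1,T}}{t_1}\big(\pi_T^*(K_T)|_S\big)^2$, followed by either $\big(\pi_T^*(K_T)|_S\big)^2\geq\beta\xi$ or $\big(\pi_T^*(K_T)|_S\big)^2\geq\big(\tfrac{a_{t_1,T}}{t_1+a_{t_1,T}}\big)^2K_{S_0}^2$ via \eqref{cri1}, and then the same numerical case analysis as in Theorems \ref{thm(dim2)} and \ref{thm(dim1)}, with the extremal branch $(t_1,a_{t_1,T},\beta,\xi)=(5,1,\tfrac15,\tfrac{5}{13})$ producing $\tfrac{1}{520}$; this is the paper's proof of Theorem \ref{volume}.

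There is, however, one concrete soft spot in the bookkeeping you yourself flag. In Type (ii) ($t_1=4$), when $|N|$ is composed of a pencil whose generic irreducible element $S$ is neither a $(1,2)$- nor a $(2,3)$-surface --- precisely a branch where the birationality proof records no $(\beta,\xi)$ and where you propose to fall back on \eqref{cri1} --- your bound is $\tfrac18\cdot\tfrac{a_{t_1,T}}{4}\cdot\big(\tfrac{a_{t_1,T}}{4+a_{t_1,T}}\big)^2K_{S_0}^2$, and since $P_4(T)=2$ is allowed one can only guarantee $a_{t_1,T}\geq 1$; with $K_{S_0}^2=1$ (e.g.\ a pencil of $(1,1)$-surfaces, which are neither $(1,2)$ nor $(2,3)$) this gives only $\tfrac{1}{800}<\tfrac{1}{520}$. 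So the assertion that ``every sub-case stays well above $\tfrac{1}{520}$'' is not justified by the $K_{S_0}^2$ route alone. The paper closes this branch with finer input: pencils of surfaces with $K_{S_0}^2\geq2$ do follow from \eqref{cri1} (giving $\geq\tfrac{1}{400}$), $(2,3)$-pencils use $(\beta,\xi)\geq(\tfrac15,\tfrac25)$ (giving $\tfrac{1}{400}$), and the residual $K_{S_0}^2=1$ pencils use the estimates $\beta\geq\tfrac13$, $\xi\geq\tfrac13$ drawn from the case-by-case analysis in \cite{CHP} (giving $\tfrac{1}{288}$). You need this extra input (or an argument excluding such pencils) to complete that branch; with it, the rest of your argument coincides with the paper's and the minimum $\tfrac{1}{520}$ is attained where you say it is.
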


\begin{proof}
We have $\text{Vol}(Y)=K_Y^4=(\pi^*(K_Y))^4$.

Recall that $\pi^*(K_Y) \equiv \theta_1T'+E_1$. One has
$$\text{Vol}(Y)
\geq \theta_1(\pi^*(K_Y))^3 \cdot T'
= \theta_1(\pi^*(K_Y)|_{T'})^3.$$
As we also have \eqref{cri} and $t_1\pi_T^*(K_T) \equiv a_{t_1,T}S+E_{N}$,
it follows that
\begin{eqnarray*}
\text{Vol}(Y)
&\geq& \theta_1 \cdot (\frac{\theta_1}{1+\theta_1})^3(\pi_T^*(K_T))^3 \\
&\geq& \theta_1 \cdot (\frac{\theta_1}{1+\theta_1})^3 \cdot \frac{a_{t_1,T}}{t_1}(S \cdot (\pi_T^*(K_T))^2)\\
&=& \theta_1 \cdot (\frac{\theta_1}{1+\theta_1})^3 \cdot \frac{a_{t_1,T}}{t_1} (\pi_T^*(K_T)|_S)^2.
\end{eqnarray*}
By \eqref{cri1} and $\pi_T^*(K_T)|_S \geq \beta C$, we may get
\begin{eqnarray*}
\text{Vol}(Y)
&\geq& \theta_1 \cdot (\frac{\theta_1}{1+\theta_1})^3 \cdot \frac{a_{t_1,T}}{t_1} \cdot (\frac{a_{t_1,T}}{t_1+a_{t_1,T}})^2 K_{S_0}^2
\end{eqnarray*}
and
\begin{eqnarray*}
\text{Vol}(Y)
&\geq& \theta_1 \cdot (\frac{\theta_1}{1+\theta_1})^3 \cdot \frac{a_{t_1,T}}{t_1} \cdot \beta (\pi_T^*(K_T)|_S \cdot C)\\
&=& \theta_1 \cdot (\frac{\theta_1}{1+\theta_1})^3 \cdot \frac{a_{t_1,T}}{t_1} \cdot \beta \xi.
\end{eqnarray*}

Now we estimate the canonical volume  according to the same classification of $T$ and $S$ as in Subsection \ref{dim>1} and Subsection \ref{dim=1}.
\medskip

(I) The case of $\dim(\Gamma) \geq 2$

Remember that in this case, $\theta_1=1, t_1=1, a_{t_1,T}=2$ (by \eqref{beq}) and $\pi_T^*(K_T)|_S \geq \frac{2}{3} \sigma^*(K_{S_0})$ (by \eqref{ET}). So we have
$\text{Vol}(Y) \geq \frac{1}{9}.$
\medskip

(II) The case of $\dim(\Gamma)=1$

Subcase (II-1). $p_g(T) \geq 2$.

As in Theorem \ref{thm(dim1)}, Case 1, $t_1=1, a_{t_1,T}=1$, so we correspondingly have the estimation as follows:
\begin{enumerate}
\item[(1)] $\beta=1$, $\xi\geq 1$, then $\text{Vol}(Y) \geq \frac{1}{8}$;
\item[(2)] $\beta=\frac{1}{4}$, $\xi\geq \frac{5}{4}$, then $\text{Vol}(Y) \geq \frac{5}{128}$;
\item[(3)] $\beta=\frac{1}{2}$, $\xi\geq \frac{2}{3}$, then $\text{Vol}(Y) \geq \frac{1}{24}$;
\item[(4)] $\beta=\frac{1}{2}$, $\xi\geq 1$, then $\text{Vol}(Y) \geq \frac{1}{16}$;
\item[(5)] $\beta=\frac{1}{4}$, $\xi\geq 2$, then $\text{Vol}(Y) \geq \frac{1}{16}$.
\end{enumerate}
\medskip

Subcase (II-2). $p_g(T)=1$.

We follow the same classification of $T$ as in Theorem \ref{thm(dim1)},  Case 2.

Recall that for Type (i), we have $t_1=5$.  When $|N|$ is composed of a pencil and the general fiber $S$ of the induced fibration from $\varphi_{t_1, T} \circ \pi_T$ is not a $(1,2)$-surface, we have $a_{t_1,T}\geq 2$, $\beta \geq \frac{1}{7}$, $\xi \geq (\frac{2}{7}\sigma^*(K_{S_0}) \cdot C) \geq \frac{4}{7}$, and thus $\text{Vol}(Y) \geq \frac{1}{245}$. When $|N|$ is composed of a pencil and the general fiber $S$ is a $(1,2)$-surface, we have $a_{t_1,T}\geq 2$, $\beta\geq \frac{2}{7}$, $\xi\geq \frac{2}{7}$, and hence $\text{Vol}(Y) \geq \frac{1}{245}$.
When $|N|$ is not composed of a pencil, we have $a_{t_1,T}\geq 1$. The corresponding lower bounds of $\text{Vol}(Y)$ to those of $(\beta,\xi)$ are as follows:
$\frac{3}{1400}, \frac{1}{300}, \frac{1}{360}, \frac{1}{520},\frac{1}{200}, \frac{1}{240}, \frac{1}{400}, \frac{1}{300}, \frac{1}{480}$.

For Type (ii), we have $t_1=4$. When $|N|$ is not composed of a pencil, then $\beta \geq \frac{1}{4}, \xi \geq \frac{2}{5}$ and $\text{Vol}(Y) \geq \frac{1}{320}$. When $|N|$ is composed of a pencil of $(2,3)$-surfaces, then $\beta \geq \frac{1}{5}, \xi \geq \frac{2}{5}$ and $\text{Vol}(Y) \geq \frac{1}{400}$. When $|N|$ is composed of a pencil of surfaces with $K_{S_0}^2 \geq 2$, then $\text{Vol}(Y) \geq \frac{1}{400}$.
When $P_4(T)=h^0(T,\mathcal{O}_T(4K_T))=2$ and $|N|$ is composed of a rational pencil of $(1,2)$-surfaces, the corresponding lower bounds of $\text{Vol}(Y)$ to those of $(\beta,\xi)$ are as follows: $\frac{1}{392}, \frac{1}{400}, \frac{1}{280}, \frac{1}{480}, \frac{1}{240}, \frac{1}{384}, \frac{1}{336},\frac{1}{448}.$ Otherwise, $\beta \geq \frac{1}{3}, \xi \geq \frac{1}{3}$ and $\text{Vol}(Y) \geq \frac{1}{288}$.

So we have shown $\text{Vol}(Y) \geq \frac{1}{520}$.
\end{proof}

\subsection{Proof of Theorem \ref{main theorem}}
\

\begin{proof}
Theorem \ref{thm(dim2)}, Theorem \ref{thm(dim1)} and Theorem \ref{volume} directly implies Theorem \ref{main theorem}.
\end{proof}

Acknowledgment. The author would like to express her gratitude to Meng Chen for his guidance over this paper and his encouragement to the author. The author would also like to thank Dr. Yong Hu for pointing out the nonexistence of a kind of fibration in Subcase(II-2) in the previous version, which improves my result in the previous version.


\end{document}